\def\a{\alpha}
\def\r{\gamma}
\def\b{\beta}
\def\Z{\mathbb{Z}}
\def\N{\mathbb{N}}
\def\C{\mathbb{C}}
\numberwithin{equation}{section}
\newtheorem{theo}{Theorem}[section]
\newtheorem{defi}[theo]{Definition}
\newtheorem{coro}[theo]{Corollary}
\newtheorem{lemm}[theo]{Lemma}
\newtheorem{prop}[theo]{Proposition}
\newtheorem{remm}[theo]{Remark}
\newtheorem{clai}{Claim}
\begin{document}

\title[Irreducible tensor product modules]{Irreducible tensor product modules over the affine-Virasoro algebra of type $A_1$}

\author{Qiu-Fan Chen, Yu-Feng Yao}

\address{Department of Mathematics, Shanghai Maritime University,
 Shanghai, 201306, China.}\email{chenqf@shmtu.edu.cn}
\address{Department of Mathematics, Shanghai Maritime University,
 Shanghai, 201306, China.}\email{yfyao@shmtu.edu.cn}

\subjclass[2010]{17B10, 17B65, 17B68, 17B70}

\keywords{affine-Virasoro algebra, tensor product, non-weight module, highest weight module}

\thanks{This work is supported by National Natural Science Foundation of China (Grant Nos. 11801363, 11771279, 12071136 and 11671247).}

\begin{abstract}
In this paper, we construct a class of non-weight modules over the affine-Virasoro algebra of type $A_1$ by taking tensor products of irreducibles  defined in \cite{CH1} with irreducible highest weight modules. The irreducibility and the isomorphism classes of these modules are determined. Moreover, we show that these tensor product modules are different from the known non-weight modules. Finally, we realize some tensor product modules as induced modules from modules over certain subalgebras of the affine-Virasoro algebra of type $A_1$, and give sufficient and necessary conditions for these induced modules to be reducible.
\end{abstract}

\maketitle

\section{Introduction}
Throughout the paper, we denote by $\C ,\,\Z,\,\C^*,\,\Z_+,\,\N$ the sets of complex numbers, integers, nonzero complex numbers, nonnegative integers and positive integers, respectively. All algebras (modules, vector spaces) are assumed to be  over $\C$. For a Lie algebra $\mathfrak{g}$, we use $U(\mathfrak{g})$ to denote the universal enveloping algebra of $\mathfrak{g}$.  More generally, for a subset $X$ of $\mathfrak{g}$, we use $U(X)$ to denote the universal enveloping algebra of the subalgebra of $\mathfrak{g}$ generated by $X$.

It is well known that representation theory of the Virasoro algebra and affine Kac Moody Lie algebras plays an important role both in physics and in mathematics. The Virasoro algebra acts on any (except when the level is negative the dual coxeter number) highest weight module of the affine Lie algebra through the use of the famous Sugawara operators.  The affine Lie algebras admit representations on the Fock space and hence admit representations of the Virasoro algebra. This close
relationship strongly suggests that they should be considered simultaneously, i.e., as one algebraic structure, and hence has led to the definition of the so-called affine-Virasoro algebra \cite{K,K1}. Highest weight representations and integrable  representations of the affine-Virasoro algebras have been  extensively studied  (cf.~\cite{B}, \cite{EJ}, \cite{JY}-\cite{LQ},  \cite{XH}). Quite recently, the authors  gave the classification of irreducible quasi-finite modules  over the affine-Virasoro algebras in \cite{LPX}. The affine-Virasoro algebras  are very meaningful in the sense that they are closely connected to the conformal field theory. For example, the even part of the $N=3$ superconformal algebra \cite{CL} is just the affine-Virasoro algebra of type $A_1$. The affine-Virasoro algebra of type $A_1$, denoted by $\mathfrak L$,  is defined as the Lie algebra with $\C$-basis $\{e_i,\, f_i,\,h_i,\, d_i,\,C\mid i\in\Z\}$ subject to the following Lie brackets:
\begin{equation*}\label{L-action}
\aligned
&[e_i,f_j]=h_{i+j}+i\delta_{i+j,0}C,\\
&[h_i,e_j]=2e_{i+j},\quad [h_i,f_j]=-2f_{i+j},\\
&[d_i,d_j]=(j-i)d_{i+j}+\delta_{i+j,0}\frac{i^3-i}{12}C,\\
&[d_i,h_j]=jh_{i+j},\quad [h_i,h_j]=-2i\delta_{i+j,0}C,\\
&[d_i,e_j]=je_{i+j},\quad [d_i,f_j]=jf_{i+j},\\
&[e_i,e_j]=[f_i,f_j]=[C,\mathfrak{L}]=0.
\endaligned
\end{equation*}
It is clear that  $\mathfrak{h}:=\C d_0+\C h_0+\C C$ is the Cartan subalgebra  of $\mathfrak{L}$. Moreover, $\mathfrak L$ admits a triangular decomposition:
$$\mathfrak{L}=\mathfrak{L}_{-}\oplus\mathfrak{h}\oplus\mathfrak{L}_{+},$$
where
\begin{eqnarray*}
\mathfrak{L}_{-}={\rm span\,}_{\C}\{e_{-i}, f_{-i},  h_{-i}, d_{-i}, f_0 \mid i\in\N\}
\end{eqnarray*}
and \begin{eqnarray*}\mathfrak{L}_{+}={\rm span\,}_{\C}\{e_{i}, f_{i},  h_{i}, d_{i}, e_0\mid i\in\N\}.
\end{eqnarray*}
The classification of all irreducible Harish-Chandra modules over $\mathfrak L$ was achieved in \cite{GHL}.

In recent years, many authors constructed various irreducible non-Harish-Chandra modules and irreducible non-weight modules (cf.~\cite{BM}, \cite{CTZ, CZ}, \cite{HCS}, \cite{MW}-\cite{TZ}). In particular, J. Nilsson \cite{N} constructed  a class of  $\mathfrak{sl_{n+1}}$-modules that are free of rank one when restricted to the Cartan subalgebra.  Since then, this kind of non-weight modules, which many authors call $U(\mathfrak{h})$-free modules, have been extensively studied. Especially, the authors classified the $U(\mathfrak{h})$-free modules of rank one for  $\mathfrak{L}$ in \cite{CH1}. Moreover, the irreducibility and isomorphism classes of these modules were determined therein.

It is well known that an important way to construct new modules over an algebra is to consider the linear tensor product of two known modules
over the algebra (cf.~ \cite{CG2, CHSY}, \cite{GLW}, \cite{TZ1}, \cite{Z}). The purpose of the present paper is to construct new irreducible non-weight $\mathfrak{L}$-modules by taking tensor product of irreducible modules defined in \cite{CH1} with irreducible highest weight modules.

The present paper is organized as follows. In Section 2, we recall some known modules and results from \cite{CH1}.  Section 3 is devoted to studying the irreducibility of the tensor product modules $M(\lambda, \alpha, \beta, \gamma)\otimes V(\theta, \epsilon, \eta)$, where $M(\lambda, \alpha, \beta, \gamma)=\Omega(\lambda,\a, \b, \r), \Delta(\lambda, \a, \b, \r)$ or  $\Theta(\lambda, \a, \b, \r)\, (2\b \notin \Z_+)$ (see \S\,\ref{pre} for the definition). In Section 4, we give a necessary and sufficient condition for two irreducible tensor modules to be isomorphic. In Section 5 we show that this class of tensor product modules are new by showing that they are not isomorphic to the known ones.  In the final section, we realize some tensor product modules as induced modules from modules over certain subalgebras of $\mathfrak{L}$, and give sufficient and necessary conditions for these induced modules to be reducible.

\section{Preliminaries}\label{pre}
Let us first recall the definitions of the  $\mathfrak{L}$-modules $\Omega(\lambda,\a, \b, \r), \Delta(\lambda, \a, \b, \r), \Theta(\lambda, \a, \b, \r)$ and $V(\eta, \epsilon, \theta)$ concerned in this paper, and some basic properties of them. Denote by $\C[s,t]$ the polynomial algebra in variables $s$ and $t$ with coefficients in $\C$. As vector spaces, $\Omega(\lambda,\a, \b, \r), \Delta(\lambda, \a, \b, \r)$ and $\Theta(\lambda, \a, \b, \r)$ coincide with $\C[s,t]$.

\begin{defi}\label{defi2.2} For $\lambda,\a\in\C^*, \b,\r\in\C, i\in\Z$ and $g(s,t)\in\C[s,t]$, define the $\mathfrak{L}$-module action on $\C[s,t]$ as follows:
\begin{align*}
\Omega(\lambda,\a, \b, \r):&\ \ \ \ e_i\cdot g(s,t)=\lambda^i\a g(s-i,t-2),\\
& \ \ \ \ f_i\cdot g(s,t)=-\frac{\lambda^i}\a(\frac{t}{2}-\b)(\frac{t}{2}+\b+1)g(s-i,t+2),\\
& \ \ \ \ h_i\cdot g(s,t)=\lambda^itg(s-i,t),\ \ \ \ d_i\cdot g(s,t)=\lambda^i(s+i\r)g(s-i,t),\\
& \ \ \ \ C\cdot g(s,t)=0;\\
\Delta(\lambda, \a, \b, \r):&\ \ \ \ e_i\cdot g(s,t)=-\frac{\lambda^i}{\a}(\frac{t}{2}+\b)(\frac{t}{2}-\b-1)g(s-i,t-2),\\
& \ \ \ \ f_i\cdot g(s,t)=\lambda^i\a g(s-i,t+2), \ \ \ \ h_i\cdot g(s,t)=\lambda^{i}tg(s-i,t),\\
& \ \ \ \ d_i\cdot g(s,t)=\lambda^i(s+i\r)g(s-i,t),\\
& \ \ \ \ C\cdot g(s,t)=0;\\
\Theta(\lambda, \a, \b, \r):&\ \ \ \ e_i\cdot g(s,t)=\lambda^i\a(\frac{t}{2}+\b)g(s-i,t-2),\\
& \ \ \ \ f_i\cdot g(s,t)=-\frac{\lambda^i}{\a}(\frac{t}{2}-\b)g(s-i,t+2),\\
& \ \ \ \ h_i\cdot g(s,t)={\lambda^i}tg(s-i,t),\ \ \ \ d_i\cdot g(s,t)=\lambda^i(s+i\r)g(s-i,t),\\
& \ \ \ \ C\cdot g(s,t)=0.
\end{align*}
\end{defi}
It is worthwhile to point out that $\C[s,t]$ in each case has the same module structure over the subalgebra $\text{span}_{\C}\{h_i,d_i,C\mid i\in\Z\}$.
%\begin{rema}\label{r-mk1}\rm  (1) Whenever  we consider the action of $\mathfrak L$ on $\C[s,t]$, we always mean one of these above.
%
%(2) Denote by $\mathfrak{H}_{h}$ the vector space spanned by  the set $\{h_i,d_i,C\mid i\in\Z\}.$  An important fact needs to be pointed here is: though $\mathfrak H_h$ is a quotient algebra of the Heisenberg-Virasoro algebra $\mathcal Vir(0,0),$ they have the same submodule structure on $\C[s,t]$ (cf. \cite{CG} or \cite{HCS}).
%
%\end{rema}
For later use, we need the following known result on conditions for irreducibility and a classification of isomorphism classes for the modules constructed above.
\begin{prop}\label{pop1}(cf. \cite{CH1})
Keep notations as above, then the following statements hold.
\begin{itemize}
\item[(1)]
$\Omega(\lambda,\a, \b, \r)$ and $\Delta(\lambda, \a, \b, \r)$ are irreducible for any $\lambda,\a\in\C^*$ and $\b,\r\in\C;$ While $\Theta(\lambda, \a, \b, \r)$ is irreducible if and only if $2\b \notin \Z_+$.
\item[(2)]
Let $\lambda_1, \lambda_2, \a_1,\a_2\in\mathbb{C}^*,\b_1, \b_2,\r_1, \r_2\in\mathbb{C}$. Then
$$\Omega(\lambda_1,\a_1,\b_1,\r_1), \Delta(\lambda_1,\a_1,\b_1,\r_1), \Theta(\lambda_1,\a_1,\b_1,\r_1)$$ are pairwise non-isomorphic for all parameter choices. Moreover,
\begin{eqnarray*} \label{xxit11}&\Omega(\lambda_1,\a_1,\b_1,\r_1)\cong
\Omega(\lambda_2, \a_2,\b_2,\r_2)\Longleftrightarrow & (\lambda_1,\a_1,\b_1,\r_1)=(\lambda_2,\a_2,\b_2,\r_2)\\
&&{\rm or}\,\, (\lambda_1,\a_1,\b_1,\r_1)=(\lambda_2,\a_2,-\b_{2}-1,\r_2);\nonumber\\
\label{xxit1}&\Delta(\lambda_1,\a_1,\b_1,\r_1)\cong
\Delta(\lambda_2,\a_2,\b_2,\r_2)\Longleftrightarrow & (\lambda_1,\a_1,\b_1,\r_1)=(\lambda_2,\a_2,\b_2,\r_2)\\
&&{\rm or}\,\, (\lambda_1,\a_1,\b_1,\r_1)=(\lambda_2,\a_2,-\b_{2}-1,\r_2);\nonumber\\
\label{xxit2}&\Theta(\lambda_1,\a_1,\b_1,\r_1)\cong\Theta(\lambda_2,\a_2,\b_2,\r_2)\Longleftrightarrow & (\lambda_1,\a_1,\b_1,\r_1)=(\lambda_2,\a_2,\b_2,\r_2).\end{eqnarray*}
\end{itemize}
\end{prop}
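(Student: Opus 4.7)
The plan rests on one structural observation: in each of the three modules the Cartan subalgebra $\mathfrak{h}=\C d_0\oplus\C h_0\oplus\C C$ acts via $d_0\leftrightarrow s$, $h_0\leftrightarrow t$, $C\leftrightarrow 0$, so the underlying space $\C[s,t]$ is a free $U(\mathfrak{h})$-module of rank one with cyclic vector $1$. This single fact drives both halves of the proposition.

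For the irreducibility of $\Omega(\lambda,\a,\b,\r)$, I would take $0\neq g$ in a nonzero submodule $N$ and apply the finite-difference operator $\a^{-1}(\lambda^{-1}e_1-e_0)\cdot g=g(s-1,t-2)-g(s,t-2)$, which strictly lowers the $s$-degree while staying nonzero. Iterating produces $0\neq g_0(t)\in N$, after which $(\a^{-1}e_0-\mathrm{id})\cdot g_0=g_0(t-2)-g_0(t)$ strictly lowers the $t$-degree; iterating again reaches a nonzero constant, so $1\in N$, and finally $d_0^ih_0^j\cdot 1=s^it^j$ exhausts $\C[s,t]$. The case $\Delta(\lambda,\a,\b,\r)$ is the mirror image, with the roles of $e$ and $f$ swapped.

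The delicate case is $\Theta$. The $s$-reduction still works (each pass multiplies by the harmless factor $(t/2+\b)$), producing $0\neq g_0(t)\in N$ of some minimal $t$-degree $m$. The naive $t$-reduction is now blocked because $e_0,f_0$ both carry $t$-factors, so instead I would compute the leading coefficients of the elements $(\a^{-1}e_0-(h_0/2+\b))\cdot g_0$ and $(-\a f_0-(h_0/2-\b))\cdot g_0$ of $N$, and observe that minimality of $m$ forces the two identities
\begin{equation*}
(t/2+\b)\bigl(g_0(t-2)-g_0(t)\bigr)=-m\,g_0(t),\qquad (t/2-\b)\bigl(g_0(t+2)-g_0(t)\bigr)=m\,g_0(t).
\end{equation*}
Eliminating $g_0(t\pm 2)$ and using $g_0\neq 0$, a short computation collapses these to $m(m-1-2\b)=0$, so that $m=0$ or $2\b=m-1\in\Z_+$. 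The hypothesis $2\b\notin\Z_+$ forces $m=0$, whence $N=\C[s,t]$. Conversely, when $m:=2\b+1\in\N$ the polynomial $g_0(t)=\prod_{k=0}^{m-1}(t/2+\b-k)$ satisfies both identities, and a direct verification shows that $g_0\cdot\C[s,t]$ is a proper $\mathfrak{L}$-submodule of $\Theta$, yielding the converse.

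For the isomorphism classification, any $\mathfrak{L}$-isomorphism $\varphi$ between two such modules is in particular a $U(\mathfrak{h})$-isomorphism of free rank-one $\C[s,t]$-modules, hence $\varphi$ is multiplication by a unit of $\C[s,t]$, i.e., a nonzero scalar. The actions of $e_i,f_i,h_i,d_i$ on the two sides must therefore match verbatim. Matching $e_i$ at $i=0,1$ yields $\a_1=\a_2$ and $\lambda_1=\lambda_2$; matching $d_i$ at $i=1$ gives $\r_1=\r_2$; matching the $f$-action (for $\Omega$ and $\Delta$) produces $(t/2-\b_1)(t/2+\b_1+1)=(t/2-\b_2)(t/2+\b_2+1)$, an identity invariant under $\b\mapsto -\b-1$ that explains the two-valued conclusion; while for $\Theta$ the linear factor $(t/2+\b)$ in the $e_0$-action pins $\b$ down uniquely. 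Finally, the three families are pairwise non-isomorphic because $\deg_t(e_0\cdot 1)$ equals $0,2,1$ for $\Omega,\Delta,\Theta$ respectively, an iso-invariant by the scalar-isomorphism principle. The principal obstacle throughout is the $\Theta$ irreducibility step: extracting the clean identity $m(m-1-2\b)=0$ requires precisely the two combinations above, after which the remainder of the proof is routine bookkeeping.
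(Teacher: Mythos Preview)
The paper does not actually prove Proposition~\ref{pop1}; it merely quotes the result from \cite{CH1}. There is therefore no ``paper's own proof'' to compare against here.

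That said, your proposal is a correct and self-contained argument. The key structural observation---that each module is free of rank one over $U(\mathfrak h)\cong\C[s,t]$ with generator $1$---is exactly the right starting point, and it makes both parts fall out cleanly. Your $s$-reduction via $\alpha^{-1}(\lambda^{-1}e_1-e_0)$ followed by $t$-reduction via $\alpha^{-1}e_0-1$ handles $\Omega$ (and dually $\Delta$) without difficulty. For $\Theta$, the device of choosing $g_0(t)\in N$ of \emph{minimal} $t$-degree $m$ and then forcing the two polynomial identities
\[
(t/2+\beta)\bigl(g_0(t-2)-g_0(t)\bigr)=-m\,g_0(t),\qquad (t/2-\beta)\bigl(g_0(t+2)-g_0(t)\bigr)=m\,g_0(t)
\]
by a leading-term comparison is sound: each displayed element lies in $N$, has degree $\le m$, and agrees with $\mp m\,g_0$ in the top coefficient, so minimality kills the difference. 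Your elimination to $m(m-1-2\beta)=0$ checks out (multiply the shifted first identity by $(t/2-\beta)$ and the second by $(t/2+1+\beta-m)$, then cancel $g_0(t+2)$ and the nonzero factor $g_0(t)$). For the converse, the ideal $g_0(t)\C[s,t]$ with $g_0(t)=\prod_{k=0}^{2\beta}(t/2+\beta-k)$ is indeed $\mathfrak L$-stable because $(t/2+\beta)g_0(t-2)=(t/2+\beta-m)g_0(t)$ and $(t/2-\beta)g_0(t+2)=(t/2+\beta+1)g_0(t)$.

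For part~(2), the rank-one freeness forces any $\mathfrak L$-isomorphism to be multiplication by a nonzero scalar, after which equating the operators $e_0,e_1,d_1,f_0$ on $1$ pins down $(\alpha,\lambda,\gamma)$ and the appropriate $\beta$-condition; the invariant $\deg_t(e_0\cdot 1)\in\{0,2,1\}$ separates the three families. All of this is correct. The only cosmetic remark is that in the $\Theta$ $s$-reduction you should note explicitly that the accumulated factor $(t/2+\beta)^k$ (shifted in $t$ at each step) never annihilates the leading $s$-term, which you implicitly use; this is immediate since the leading $s$-coefficient is a nonzero polynomial in $t$.
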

For any $\eta, \epsilon, \theta\in\C$, let $I(\eta, \epsilon, \theta)$ be the left ideal of $U(\mathfrak{L})$ generated by the following elements
$$\{e_0, e_{i}, f_{i},  h_{i}, d_{i}\mid i\in\N\}\cup\{d_0-\eta, h_0-\epsilon, C-\theta\}. $$
The Verma $\mathfrak{L}$-module with highest weight $(\eta,  \epsilon, \theta)$ is defined as the quotient module
\begin{equation*}\overline{V}(\eta, \epsilon, \theta)=U(\mathfrak{L})/I(\eta, \epsilon, \theta).\end{equation*}
By the PBW theorem, $\overline{V}(\eta, \epsilon, \theta)$ has a basis  consisting of all vectors of the form
\begin{equation*}f_{-q}^{F_{-q}}\cdots f_0^{F_0}e_{-p}^{E_{-p}}\cdots e_{-1}^{E_{-1}}h_{-m}^{H_{-m}}\cdots h_{-1}^{H_{-1}}d_{-n}^{D_{-n}}\cdots d_{-1}^{D_{-1}}\cdot v_h ,\end{equation*}
where $v_h$ is the coset of $1$ in $\overline{V}(\eta, \epsilon, \theta)$, and $$D_{-1},\cdots,D_{-n}, H_{-1}, \cdots, H_{-m}, E_{-1}, \cdots, E_{-p}, F_0,  \cdots, F_{-q}\in\Z_+.$$
Then we have the irreducible  highest weight module $V(\eta, \epsilon, \theta)=\overline{V}(\eta, \epsilon, \theta)/J$, where $J$ is the unique maximal proper submodule of $\overline{V}(\eta, \epsilon, \theta)$. Readers can refer to \cite{B, K} for the structure of $V(\eta, \epsilon, \theta)$.

In the rest of this paper, we will always assume $\lambda,\a\in\C^*, \b,\r,\eta, \epsilon, \theta \in\C$, $M(\lambda, \alpha, \beta, \gamma)=\Omega(\lambda, \alpha, \beta, \gamma)$, $\Delta(\lambda, \alpha, \beta, \gamma)$ or $\Theta(\lambda, \alpha, \beta, \gamma)$ constructed in Definition \ref{defi2.2}, and  $V(\eta, \epsilon, \theta)$ is an  irreducible  highest weight  $\mathfrak{L}$-module. Take a tensor product of the non-weight module $M(\lambda, \alpha, \beta, \gamma)$ with  irreducible  highest weight module $V(\eta, \epsilon, \theta)$. Clearly, the tensor product $\mathfrak{L}$-module $M(\lambda, \alpha, \beta, \gamma)\otimes V(\eta, \epsilon, \theta)$ is not a weight module.
\section{Irreducibility of the tensor product modules}
In this section, we show the irreducibility of the tensor product $\mathfrak{L}$-module $M(\lambda, \alpha, \beta, \gamma)\otimes V(\eta, \epsilon, \theta)$, where $2\b \notin \Z_+$ when $M=\Theta$.
\begin{theo}\label{theoo1}
The tensor product module $M(\lambda, \alpha, \beta, \gamma)\otimes V(\eta, \epsilon, \theta)$ is irreducible provided  that $2\b \notin \Z_+$ when $M=\Theta$.
\end{theo}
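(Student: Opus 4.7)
The plan is to follow the standard three-step strategy for tensor products of polynomial-type non-weight modules with highest-weight modules. Let $W$ be a nonzero $\mathfrak{L}$-submodule of $M(\lambda,\alpha,\beta,\gamma) \otimes V(\eta,\epsilon,\theta)$. I will show that $W$ contains the vector $1 \otimes v_h$, where $v_h$ is the highest-weight generator of $V(\eta,\epsilon,\theta)$, and then that $U(\mathfrak{L}) \cdot (1 \otimes v_h)$ is the whole tensor product.

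For the first part, pick a nonzero $w = \sum_{j=1}^{k} p_j(s,t) \otimes v_j \in W$, with the $v_j$'s taken to be linearly independent $(d_0, h_0)$-weight vectors of weights $(\mu_j, \nu_j)$. Two observations drive a depth-reduction argument. First, there exists $N \in \N$, depending on $w$, such that $x_n \cdot v_j = 0$ for every $n > N$, every $j$, and every $x \in \{e, f, h, d\}$, since $V(\eta,\epsilon,\theta)$ is highest weight; for such $n$, the action of $x_n$ on $w$ reduces to its polynomial action on the $p_j$'s, implementing a common shift $s \mapsto s - n$ rescaled by $\lambda^n$ and possibly combined with a $t$-shift and a fixed $t$-polynomial factor. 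Second, $d_0$ and $h_0$ act on $p_j \otimes v_j$ by multiplication by $s + \mu_j$ and $t + \nu_j$, which distinguishes the contributions of distinct $V$-weights. Combining these, I would take suitable $\C$-linear combinations involving $w$, $d_nw$, and their Cartan translates to cancel the terms of maximum depth $\max_j (\eta - \mu_j)$ without killing the $v_h$-component, and induct on depth to produce some nonzero $p(s,t) \otimes v_h \in W$.

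Once such an element lies in $W$, the set $U' := \{q \in \C[s,t] : q \otimes v_h \in W\}$ is a nonzero ideal of $\C[s,t]$, since $\mathfrak{h}$ acts on $v_h$ by scalars and so the $U(\mathfrak{h})$-action makes $U'$ stable under multiplication by every polynomial. Moreover, for $i$ in the appropriate positive range the operators $e_i$ and $f_i$ annihilate $v_h$, so they act on $p(s,t) \otimes v_h$ purely through their polynomial shifts (perhaps preceded by a fixed $t$-polynomial factor, depending on the case $\Omega$, $\Delta$, or $\Theta$), giving $U'$ invariance under further translations $q \mapsto q(s - i, t \pm 2)$. A geometric argument on the zero set of a generator of $U'$, using that its infinite family of $s$-translates and $t$-translates cannot share a common zero in $\A^2$ without forcing the generator to vanish on infinitely many horizontal lines, yields $U' = \C[s,t]$ and in particular $1 \otimes v_h \in W$; here the hypothesis $2\beta \notin \Z_+$ in the $\Theta$-case rules out the pathological $t$-polynomial factors $(t/2 - \beta)$ that could otherwise trap $U'$ in a proper ideal. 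Finally, for $X \in U(\mathfrak{L}_{-})$ the identity $X(1 \otimes v_h) = 1 \otimes Xv_h + (\text{lower-PBW-length terms})$, together with induction on the PBW-length of $X$, gives $W = M \otimes V$.

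The main obstacle is the depth-reduction step. Because $d_0$ and $h_0$ act on the polynomial factor by multiplication by the variables $s, t$ rather than by scalars, one cannot directly project onto a single $V$-weight-space by a polynomial in the Cartan; the correct cancellation has to pair the Cartan action with the high-index shift operators $x_n$ (which act cleanly on the $M$-factor because they annihilate the finite set $v_1, \ldots, v_k$), and implementing this uniformly across the three choices of $M$ requires careful bookkeeping with the extra $t$-polynomial factors appearing in the $\Delta$ and $\Theta$ cases.
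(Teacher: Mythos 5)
There is a genuine gap at the crucial first step. Your plan hinges on producing a pure tensor $p(s,t)\otimes v_h$ with $v_h$ the highest weight vector, via a ``depth-reduction'' in the $V$-factor, but the operators you propose to use cannot accomplish this. The high-index operators $x_n$ ($n\gg 0$) annihilate every $v_j$ occurring in $w$, so they act only on the polynomial factor and never change the depths of the $V$-components; the Cartan elements $d_0,h_0$ act on $p_j\otimes v_j$ by multiplication by $s+\mu_j$ and $t+\nu_j$, so they too leave the $V$-components untouched. To move any $v_j$ toward $v_h$ you must apply raising operators of small index ($e_0,e_1,h_1,d_1,\dots$), and for those the action on the $M$-factor does not vanish: you get cross terms such as $p_j(s-n,t-2)\otimes v_j$, involving genuine shifts of $s$ and $t$, which cannot be cancelled by any $\C$-linear combination of Cartan translates (multiplication by polynomials never reproduces a shift). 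So the cancellation scheme you sketch -- and explicitly flag as the ``main obstacle'' -- is not just bookkeeping; as described it does not work, and no mechanism for decreasing depth is actually provided. A secondary slip: $U'$ is an ideal of $\C[s,t]$, which is not a PID, so there is no ``generator''; the argument should run through a common zero of $U'$ and the weak Nullstellensatz (that version does work, and your use of $2\beta\notin\Z_+$ there is exactly right, matching the paper's coprimality condition in the $\Theta$ case).

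The paper sidesteps the whole difficulty by never aiming at $v_h$. It takes $w=\sum_{i=0}^r a_i(t)s^i\otimes v_i\in W$ with $r$ (the $s$-degree) minimal, applies $\lambda^{-m}d_m$ for many large $m$ and a Vandermonde argument in $m$ to extract the top coefficient, and (using $e_m$ or $f_m$ when $\gamma=0$) concludes $r=0$; this yields a pure tensor $a_0(t)\otimes v_0$ for an \emph{arbitrary} $v_0\in V(\eta,\epsilon,\theta)$. It then shows $\C[s,t]\otimes v_0\subseteq W$ by the same kind of shift/coprimality argument you use for $U'$, and finishes with the irreducibility of $V(\eta,\epsilon,\theta)$ (since $\C[s,t]\otimes v_0\subseteq W$ forces $\C[s,t]\otimes U(\mathfrak L)v_0\subseteq W$), rather than with a PBW-length induction from $1\otimes v_h$. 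If you replace your depth-reduction step by this degree-reduction in $s$, the rest of your argument (the ideal $U'$, now taken with respect to $v_0$, and the final generation step) goes through essentially unchanged.
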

\begin{proof}
Let $W$ be a nonzero submodule of $M(\lambda, \alpha, \beta, \gamma)\otimes V(\eta, \epsilon, \theta)$. We need to show that $W=M(\lambda, \alpha, \beta, \gamma)\otimes V(\eta, \epsilon, \theta)$.  It is clear that, for any $v\in V(\eta, \epsilon, \theta)$, there exists a positive integer $K(v)$ such that $d_m\cdot v=e_m\cdot v=f_m\cdot v=h_m\cdot v=0$ for all $m\geq K(v)$.
Take any nonzero element $w=\sum_{i=0}^{r}a_i(t)s^{i}\otimes v_i\in W$ with $a_i(t)\in\C[t], v_i\in V(\eta, \epsilon, \theta), a_{r}(t)\neq0, v_r\neq0$ and $r\in \Z_+$ is minimal.
\begin{clai}\label{claim1}
$r=0$.
\end{clai}
Let $K={\rm max }\{K(v_i)\mid i=0, 1, \cdots, r\}$. Then we have
\begin{equation}\label{eq-1}
\lambda^{-m}d_m\cdot w=\sum_{i=0}^{r}(s+m\gamma)a_{i}(t)(s-m)^i\otimes v_i\in W, \,\,\,\forall m\geq K.
\end{equation}

Case (i): $\gamma\neq0$.

In this case, we write the right hand side of (\ref{eq-1}) as
\begin{equation*}
\sum_{i=0}^{r+1}m^iw_i\in W,  \,\,\,\forall m\geq K,
\end{equation*}
where all $w_i\in M(\lambda, \alpha, \beta, \gamma)\otimes V(\eta, \epsilon, \theta)$ are independent of $m$. Taking $m=K, K+1, \cdots, K+r+1$, we see that the coefficient matrix of $w_i$ is a Vandermonde matrix. So each $w_i\in W$. Especially,  $w_{r+1}=(-1)^{
r}\gamma a_{r}(t)\otimes v_{r}\in W$. Thus, $r$ must be zero by its minimality.

Case (ii): $\gamma=0$.

Similar arguments as in case (i) yield that $w_{r}= s a_{r}(t)\otimes v_{r}\in W$. Moreover, when $M(\lambda, \alpha, \beta, \gamma)=\Omega(\lambda,\a, \b, \r)$, we have
\begin{equation*}
\alpha^{-1}(\lambda^{-K}e_{K}-\lambda^{-K-1}e_{K+1})\cdot w_{r}= (-1)^{r}a_{r}(t-2)\otimes v_{r}\in W.
\end{equation*}
When $M(\lambda, \alpha, \beta, \gamma)=\Delta(\lambda, \a, \b, \r)$ or $\Theta(\lambda, \a, \b, \r)$, it follows from similar computation that
$a_{r}(t+2)\otimes v_{r}\in W$, $(\frac{t}{2}+\beta)a_{r}(t-2)\otimes v_{r}\in W$, respectively. In conclusion, $r$ must be zero in each situation by its minimality.
\begin{clai}
$W=M(\lambda, \alpha, \beta, \gamma)\otimes V(\eta, \epsilon, \theta).$
\end{clai}
By Claim \ref{claim1}, $a_0(t)\otimes v_0\in W$. Fix this $v_0$ and let
$$P=\{a(s,t)\in\C[s,t]\mid a(s,t)\otimes v_0\in W\}.$$
For any $a(t)\in \C[t], k\in \N$ and $m\geq K(v_0)$, by induction on $k$, we  have the following two formulae.
\begin{eqnarray}
\label{hm1}\lambda^{-mk}h_m^{k}\cdot \big(a(t)\otimes v_0\big)=t^{k}a(t)\otimes v_0, \\
\label{hm2}\lambda^{-mk}d_m^{k}\cdot \big(a(t)\otimes v_0\big)=\prod_{i=0}^{k-1}(s+m\gamma-mi)a(t)\otimes v_0.
\end{eqnarray}
We write $W$ as  $W_\Omega, W_\Delta$ and $W_\Theta$ to emphasis that $W$ is an $\mathfrak L$-submodule of $\Omega(\lambda,\a, \b, \r)\otimes V(\eta, \epsilon, \theta),$ $\Delta(\lambda, \a, \b, \r)\otimes V(\eta, \epsilon, \theta)$ and $\Theta(\lambda, \a, \b, \r)\otimes V(\eta, \epsilon, \theta)$, respectively. By Definition \ref{defi2.2}, one can inductively show that for $m\geq K(v_0)$
\begin{eqnarray*}\label{dda-a-1}\lambda^{-mk}\a^{-k}e_m^k\cdot \big(a_0(t)\otimes v_0\big)&=&a_0(t-2k)\otimes v_0\in W_\Omega,\\
\label{dda-a-2} \lambda^{-mk}\a^{-k}f_m^k\cdot \big(a_0(t)\otimes v_0\big)&=&a_0(t+2k)\otimes v_0\in W_\Delta,\\
\label{eqn5}\lambda^{-mk}\a^{-k}e_m^k\cdot \big(a_0(t)\otimes v_0\big)&=&\prod_{n=0}^{k-1}(\frac{t}{2}+\b-n)a_0(t-2k)\otimes v_0\in W_\Theta,\\
\label{eqn6}\lambda^{-mk}(-\a)^{-k}f_m^k\cdot  \big(a_0(t)\otimes v_0\big)&=&\prod_{n=0}^{k-1}(\frac{t}{2}-\b+n)a_0(t+2k)\otimes v_0\in W_\Theta.\end{eqnarray*}
Note that we can choose $k$ large enough so that $$\big(a_0(t),a_0(t-2k)\big)=1,\ \big(a_0(t),a_0(t+2k)\big)=1$$  and  $$ \big(\prod_{n=0}^{k-1}(\frac{t}{2}+\b-n)a_0(t-2k), \prod_{n=0}^{k-1}(\frac{t}{2}-\b+n)a_0(t+2k)\big)=1,\quad {\rm if }\ 2\b\notin\Z_+.$$
It follows from these and \eqref{hm1} that
$$1\otimes v_0\in W_\Omega, \,1\otimes v_0\in W_\Delta  \quad {\rm and }\ 1\otimes v_0\in W_\Theta,$$ which in turn force $\C[t]\subseteq P$ in each case. Since \eqref{hm2} implies that $P$ is stable under the multiplication by $s$, it follows that $P=\C[s, t]= M(\lambda, \alpha, \beta, \gamma)$. Hence, $M(\lambda, \alpha, \beta, \gamma)\otimes U(\mathfrak L)v_0\subseteq W$. Consequently, $W=M(\lambda, \alpha, \beta, \gamma)\otimes V(\eta, \epsilon, \theta)$ by the irreducibility of $V(\eta, \epsilon, \theta)$. We complete the proof.
\end{proof}

\section{Isomorphism classes of the tensor product modules}
In this section, we give the classification of the isomorphism classes of the tensor product $\mathfrak{L}$-modules $M(\lambda, \alpha, \beta, \gamma)\otimes V(\eta, \epsilon, \theta)$, where $2\b \notin \Z_+$ when $M=\Theta$.
\begin{theo}\label{theoo2}
Let $\lambda_i, \a_i\in\mathbb{C}^*,\b_i,\r_i, \theta_i, \epsilon_i, \eta_i\in\mathbb{C}$, where $i=1, 2$, and $V(\eta_1, \epsilon_1,\theta_1)$, $V(\eta_2, \epsilon_2, \theta_2)$ are irreducible  highest weight modules. Let $M(\lambda, \alpha, \beta, \gamma)=\Omega(\lambda,\a, \b, \r)$, $\Delta(\lambda, \a, \b, \r)$ or $\Theta(\lambda, \a, \b, \r)\, (2\b \notin \Z_+)$ constructed in Definition \ref{defi2.2}. Then $M(\lambda_1, \alpha_1, \beta_1, \gamma_1)\otimes V(\eta_1, \epsilon_1,\theta_1)$ and  $M(\lambda_2, \alpha_2, \beta_2, \gamma_2)\otimes V(\eta_2, \epsilon_2, \theta_2)$
are isomorphic as  $\mathfrak L$-modules if and only if
$$M(\lambda_1, \alpha_1, \beta_1, \gamma_1)\cong M(\lambda_2, \alpha_2, \beta_2, \gamma_2) \quad {\rm and }\quad V(\eta_1, \epsilon_1,\theta_1)\cong V(\eta_2, \epsilon_2, \theta_2).$$
\end{theo}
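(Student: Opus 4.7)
The ``if'' direction is immediate. For the ``only if'' direction, let $\varphi:\,M_1\otimes V_1\to M_2\otimes V_2$ be an $\mathfrak L$-module isomorphism (abbreviating $M_i=M(\lambda_i,\a_i,\b_i,\r_i)$, $V_i=V(\eta_i,\epsilon_i,\theta_i)$), and set $u_0:=\varphi(1\otimes v_h^{(1)}) = \sum_{j=1}^N g_j(s,t)\otimes w_j$, where $v_h^{(i)}$ is the highest-weight generator of $V_i$ and $\{w_j\}$ is linearly independent in $V_2$. The plan is to extract all parameter matches from the identities $\varphi(X\cdot(1\otimes v_h^{(1)}))=X\cdot u_0$ for a sequence of well-chosen $X\in\mathfrak L$. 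The central action immediately forces $\theta_1=\theta_2$.

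The principal technical device is ``large-$m$ asymptotics'': for $m>K:=\max_j K(w_j)$, each of $d_m,e_m,f_m,h_m$ annihilates every $w_j$, so the action on $u_0$ is dictated purely by the $M_2$-formulas of Definition~\ref{defi2.2}. First, $h_m$ yields
\[
\lambda_1^m\,\varphi(t\otimes v_h^{(1)}) \;=\; \lambda_2^m\, t \sum_j g_j(s-m,t)\otimes w_j,
\]
an equality of polynomial-exponential functions of $m$; matching exponential rates forces $\lambda_1=\lambda_2=:\lambda$, and matching the residual polynomial-in-$m$ parts forces each $g_j(s-m,t)$ to be $m$-independent, hence $g_j\in\C[t]$, and moreover $\varphi(p(t)\otimes v_h^{(1)}) = p(t)\cdot u_0$ for every $p\in\C[t]$. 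Next, $d_m$ gives $\lambda^m[\varphi(s\otimes v_h^{(1)})+m\r_1 u_0] = \lambda^m(s+m\r_2)u_0$, and matching constant and linear terms in $m$ yields $\varphi(s\otimes v_h^{(1)})=s\cdot u_0$ and $\r_1=\r_2$. Finally, $e_m$ and $f_m$ produce polynomial-in-$t$ identities that must match between the $M_1$- and $M_2$-actions: comparing $t$-degrees of $e_m\cdot 1$ and $f_m\cdot 1$ rules out every cross-type isomorphism among $\Omega,\Delta,\Theta$; matching leading coefficients in $t$ forces $\a_1=\a_2$ and each $g_j$ to be constant, so $u_0 = 1\otimes v$ for some nonzero $v\in V_2$; and matching the remaining $t$-factors recovers $\b_1\in\{\b_2,-\b_2-1\}$ for $M=\Omega,\Delta$ and $\b_1=\b_2$ for $M=\Theta$, which is exactly Proposition~\ref{pop1}(2). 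Thus $M_1\cong M_2$.

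With $u_0=1\otimes v$, each identity $X\cdot u_0 = \varphi(X\cdot (1\otimes v_h^{(1)}))$ collapses: running $X=C,d_0,h_0,e_0$ and $X=d_m,h_m,e_m,f_m$ ($m\ge 1$) produces $Cv=\theta_1 v$, $d_0v=\eta_1 v$, $h_0v=\epsilon_1 v$, $e_0v=0$, and $d_mv=h_mv=e_mv=f_mv=0$ for $m\ge1$. So $v$ is a nonzero highest-weight vector of weight $(\eta_1,\epsilon_1,\theta_1)$ in the irreducible module $V_2=V(\eta_2,\epsilon_2,\theta_2)$, forcing $v\in\C v_h^{(2)}$ and $(\eta_1,\epsilon_1,\theta_1)=(\eta_2,\epsilon_2,\theta_2)$, so $V_1\cong V_2$. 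I expect the main obstacle to be the $\Theta$-case of showing each $g_j$ is constant: there $e_m\cdot 1$ and $f_m\cdot 1$ each carry a single linear $t$-factor, so the two resulting polynomial identities must be solved simultaneously by a root-multiset comparison, and a non-constant solution would precisely force $2\b_2\in\Z_+$, exactly the hypothesis we have excluded.
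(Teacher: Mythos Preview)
Your argument is correct and follows the same large-$m$ strategy as the paper, but the execution differs in two useful ways worth noting.

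First, you lead with $h_m$ rather than $d_m$. Since $h_m$ acts on $M$ simply by $\lambda^m t\,g(s-m,t)$ (no $(s+m\gamma)$ factor), the identity $\lambda_1^m\varphi(t\otimes v_h^{(1)})=\lambda_2^m\sum_j t\,g_j(s-m,t)\otimes w_j$ immediately forces $\lambda_1=\lambda_2$ and $g_j\in\C[t]$ in one stroke. The paper instead starts with $(\lambda_1^{-m_1}d_{m_1}-\lambda_1^{-m_2}d_{m_2})$, which introduces the factor $(s+m\gamma_2)$ and obliges a case split on $\gamma_2=0$ versus $\gamma_2\neq 0$, together with a separate $e_m$-computation to rule out the residual possibility $n=1,\ \gamma_1=-1$. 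Your ordering avoids this entirely; once $g_j\in\C[t]$, the $d_m$-identity becomes linear in $m$ and $\gamma_1=\gamma_2$ drops out at once.

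Second, your endgame is different. You work only with the highest-weight vector $v_h^{(1)}$, reduce $u_0$ to $1\otimes v$, and then check directly that $v$ is annihilated by $e_0$ and by $d_m,h_m,e_m,f_m$ for $m\ge 1$ and carries the eigenvalues $(\eta_1,\epsilon_1,\theta_1)$ under $d_0,h_0,C$; irreducibility of $V_2$ finishes. The paper instead shows that $\phi(1\otimes v)=1\otimes\tau(v)$ for \emph{every} $v\in V_1$, and then verifies generator-by-generator that $\tau$ is an $\mathfrak L$-homomorphism. Your route is shorter; the paper's route has the small conceptual payoff of exhibiting the isomorphism $\tau$ explicitly.

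Your sketch of the $\Theta$-constancy step is accurate: combining $(t+2\beta_1)g(t)=(t+2\beta_2)g(t-2)$ with the shifted $f_m$-identity $(t-2-2\beta_2)g(t)=(t-2-2\beta_1)g(t-2)$ forces $\beta_1(\beta_1+1)=\beta_2(\beta_2+1)$, and in the branch $\beta_1=-\beta_2-1$ a root-sum count on the first identity yields $2\beta_2=\deg g-1\in\Z_+$, contradicting the hypothesis; so $\beta_1=\beta_2$ and $g$ is constant.
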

\begin{proof}
We only tackle  the case  $M(\lambda, \alpha, \beta, \gamma)=\Omega(\lambda,\a, \b, \r)$, since the other two cases can be treated similarly. The sufficiency is obvious and it suffices to show the necessity. Let $\phi$ be an
$\mathfrak L$-module isomorphism from $\Omega(\lambda_1,\a_1, \b_1, \r_1)\otimes V(\eta_1, \epsilon_1,\theta_1)$ to $\Omega(\lambda_2, \alpha_2, \beta_2, \gamma_2)\otimes V(\eta_2, \epsilon_2, \theta_2)$. Take a nonzero element $v\in V(\eta_1, \epsilon_1,\theta_1)$. Suppose
$$\phi(1\otimes v)=\sum_{i=0}^{n}a_i(t)s^{i}\otimes w_i,$$
where $a_i(t)\in\C[t], w_i\in V(\eta_2, \epsilon_2, \theta_2)$ with $a_{n}(t)\neq0, w_n\neq0$. There exists a positive integer $K={\rm max\,}\{K(v), K(w_i)\mid i=0,\cdots,n\}$ such that $d_m\cdot v=d_m\cdot w_i=e_m\cdot v=e_m\cdot w_i=f_m\cdot v=f_m\cdot w_i=h_m\cdot v=h_m\cdot w_i=0$ for all $m\geq K$ and $0\leq i \leq n$. For any two different integers $m_1, m_2\geq K$, we have
$$(\lambda_1^{-m_{1}}d_{m_{1}}-\lambda_1^{-m_{2}}d_{m_{2}})\cdot (1\otimes v)=(m_1-m_2)\gamma_1(1\otimes v).$$
Then by applying $\phi$ on both sides, we obtain
\begin{eqnarray}\label{for1}
&&(m_1-m_2)\gamma_1\sum_{i=0}^{n}a_i(t)s^{i}\otimes w_i\nonumber\\
&=&(\lambda_1^{-m_1}d_{m_1}-\lambda_1^{-m_2}d_{m_2})\cdot\sum_{i=0}^{n}a_i(t)s^{i}\otimes w_i\nonumber\\
&=&\sum_{i=0}^{n}(\frac{\lambda_2}{\lambda_1})^{m_1}(s+m_1\gamma_2)(s-m_1)^ia_i(t)\otimes w_i\nonumber\\
&&-\sum_{i=0}^{n}(\frac{\lambda_2}{\lambda_1})^{m_2}(s+m_2\gamma_2)(s-m_2)^ia_i(t)\otimes w_i.
\end{eqnarray}
Comparing the coefficients of $s^{n+1}$ in the above formula, we deduce that
$$\big((\frac{\lambda_2}{\lambda_1})^{m_1}-(\frac{\lambda_2}{\lambda_1})^{m_2}\big)(a_n(t)\otimes w_n)=0,$$
forcing $\lambda_1=\lambda_2$. Then \eqref{for1} can be simplified as
\begin{eqnarray}\label{for3}
&&(m_1-m_2)\gamma_1\sum_{i=0}^{n}a_i(t)s^{i}\otimes w_i\nonumber\\
&=&\sum_{i=0}^{n}\big((s+m_1\gamma_2)(s-m_1)^i-(s+m_2\gamma_2)(s-m_2)^i\big)a_i(t)\otimes w_i.
\end{eqnarray}
Regard it as a polynomial in $m_1, m_2$ with coefficients in $\Omega(\lambda_2, \alpha_2, \beta_2, \gamma_2)\otimes V(\eta_2, \epsilon_2, \theta_2)$. If $\gamma_2\neq0$, it follows from \eqref{for3} that $n=0$, since otherwise the coefficient $(-1)^n\gamma_2a_n(t)\otimes w_n$ of $m_1^{n+1}$ would be zero, yielding a contradiction. Then it follows from \eqref{for3} that $\gamma_1=\gamma_2\neq0$. If $\gamma_2=0$, \eqref{for3} becomes
\begin{equation*}\label{for2}
(m_1-m_2)\gamma_1\sum_{i=0}^{n}a_i(t)s^{i}\otimes w_i=s\sum_{i=0}^{n}\big((s-m_1)^i-(s-m_2)^i\big)a_i(t)\otimes w_i,
\end{equation*}
Also, by regarding it as a polynomial in $m_1, m_2$ with coefficients in $\Omega(\lambda_2, \alpha_2, \beta_2, \gamma_2)\otimes V(\eta_2, \epsilon_2, \theta_2)$, we see that $n\leq1$. If $n=1$, then the above formula becomes
\begin{equation*}
(m_1-m_2)\gamma_1\big(a_0(t)\otimes w_0+a_1(t)s\otimes w_1\big)=(m_2-m_1)a_1(t)s\otimes w_1,
\end{equation*}
yielding $\gamma_1=-1$ and $a_0(t)\otimes w_0=0$. So, $\phi(1\otimes v)=a_1(t)s\otimes w_1$. Since
\begin{equation}\label{for5}
\lambda^{-m}\alpha_{1}^{-1}e_{m}\cdot (1\otimes v)=1\otimes v \,\,{\rm for\,\,any}\,\, m\geq K,
\end{equation}
by applying $\phi$ on both sides of (\ref{for5}), it follows that
\begin{eqnarray*}\label{for4}
a_1(t)s\otimes w_1&=&\lambda^{-m}\alpha_{1}^{-1}e_{m}\cdot\big(a_1(t)s\otimes w_1\big)\nonumber\\
&=&\alpha_{1}^{-1}\alpha_{2}a_1(t-2)(s-m)\otimes w_1,
\end{eqnarray*}
forcing $\alpha_{1}^{-1}\alpha_{2}a_1(t-2)\otimes w_1=0$, a contradiction with the assumption that $a_1(t)s\otimes w_1\neq0$.
Therefore,   $n=0$ and $\gamma_1=0$. The preceding discussion shows that $\phi(1\otimes v)=a_0(t)\otimes w_0$ and $\gamma_1=\gamma_2$. Now applying $\phi$ to \eqref{for5} gives
$$\alpha_{1}^{-1}\alpha_{2}a_0(t-2)\otimes w_0=a_0(t)\otimes w_0,$$
which yields $a_0(t)\in\C^*$ and $\alpha_{1}=\alpha_{2}$. Without loss of generality, we assume that $a_0(t)=1$. Denote $\lambda=\lambda_1=\lambda_2$, $\alpha=\alpha_{1}=\alpha_{2}$ and  $\gamma=\gamma_1=\gamma_2$ in what follows. Thus there exists a linear injection $\tau:V(\eta_1, \epsilon_1, \theta_1)\rightarrow V(\eta_2, \epsilon_2, \theta_2)$ such that
\begin{equation}\label{equ0}\phi(1\otimes v)=1\otimes \tau(v), \,\, \forall\, v\in V(\eta_1, \epsilon_1, \theta_1).\end{equation}
For any $m\geq K$, the equations
\begin{eqnarray*}
&\phi(d_m\cdot (1\otimes v))=d_m\cdot \phi(1\otimes v), \\
&\phi(h_m\cdot (1\otimes v))=h_m\cdot \phi(1\otimes v),\\
&\phi(f_m\cdot (1\otimes v))=f_m\cdot \phi(1\otimes v)
\end{eqnarray*}
are respectively equivalent to
\begin{eqnarray*}
&\lambda^m\phi(s\otimes v)+\lambda^mm\gamma(1\otimes \tau(v))=\lambda^m(s\otimes \tau(v))
+\lambda^mm\gamma(1\otimes \tau(v)), \\
&\lambda^m\phi(t\otimes v)=\lambda^m(t\otimes \tau(v)),\\
&-\frac{\lambda^m}{\alpha}\phi\big((\frac{t^2}{4}+\frac{t}{2}-\beta_1(\beta_1+1))\otimes v\big)=-\frac{\lambda^m}{\alpha}(\frac{t^2}{4}+\frac{t}{2}-\beta_2(\beta_2+1))\otimes \tau(v),
\end{eqnarray*}
we see that
\begin{eqnarray}
\label{equ11}&\phi(s\otimes v)=s\otimes \tau(v), \\
\label{equ21}&\phi(t\otimes v)=t\otimes \tau(v),\\
\label{equ31}&\phi\big((\frac{t^2}{4}+\frac{t}{2}-\beta_1(\beta_1+1))\otimes v\big)=(\frac{t^2}{4}+\frac{t}{2}-\beta_2(\beta_2+1))\otimes \tau(v).
\end{eqnarray}
From \eqref{equ21} and
\begin{equation*}
\phi(h_m\cdot (t\otimes v))=h_m\cdot \phi(t\otimes v), \quad{\rm where}\,\,m\geq K,
\end{equation*}
we deduce that
\begin{equation}\label{equ4}
\phi(t^2\otimes v)=t^2\otimes \tau(v).
\end{equation}
This along with \eqref{equ21}-\eqref{equ31} gives $\beta_1=\beta_2$ or $\beta_1=-\beta_2-1$. Hence, $\Omega(\lambda_1,\a_1,\b_1,\r_1)\cong
\Omega(\lambda_2, \a_2,\b_2,\r_2)$ by Proposition \ref{pop1}.
Combining \eqref{equ0}-\eqref{equ21} with  \eqref{equ4}, we obtain
\begin{equation*}
\phi((X_m\cdot 1)\otimes v)=(X_m\cdot1)\otimes\tau(v), \quad{\rm where}\,\,X_m\in\{d_m, h_m, e_m, f_m\mid\forall m\in\Z\}.
\end{equation*}
This together with
$$\phi(X_m\cdot (1\otimes v))=X_m\cdot \phi(1\otimes v),\quad{\rm where}\,\,X_m\in\{d_m, h_m, e_m, f_m\mid\forall m\in\Z\}$$
gives
$$\phi(1\otimes (X_m\cdot v))=1\otimes (X_m\cdot \tau(v)),\quad{\rm where}\,\,X_m\in\{d_m, h_m, e_m, f_m\mid\forall m\in\Z\}.$$
Therefore,
$$\tau(X_m\cdot v)=X_m\cdot\tau(v),\quad{\rm where}\,\,X_m\in\{d_m, h_m, e_m, f_m\mid\forall m\in\Z\}, v\in V(\theta_1, \epsilon_1, \eta_1).$$
Since
$$\phi(C\cdot (1\otimes v))=C\cdot \phi (1\otimes v), \,\,\forall\,v\in V(\theta_1, \epsilon_1, \eta_1),$$  we see that $\tau(C\cdot v)=C\cdot \tau(v)$.
Thus, $\tau$ is a nonzero $\mathfrak L$-module homomorphism. Note that $V(\eta_1, \epsilon_1, \theta_1))$ and $V(\eta_2, \epsilon_2, \theta_2)$ are simple $\mathfrak L$-modules, $\tau$ is an  $\mathfrak L$-module isomorphism.  We complete the proof.
\end{proof}
\section{Comparison of tensor product modules with known non-weight modules}
In this section, we compare the tensor product modules constructed in the present paper with all other known non-weight $\mathfrak L$-modules, i.e., $U(\mathfrak{h})$-free modules  of rank one and Whittaker modules (cf. \cite{CH1}).

Let $\underline{\mu}=(\mu_1,\cdots,\mu_5)\in\C^5$. Assume that $J_{\underline{\mu}}$ is the left ideal of $U(\mathfrak{L}_{+})$ generated by $\{d_1-\mu_1, d_2-\mu_2, e_0-\mu_3,  f_1-\mu_4,C-\mu_5, d_j, e_k, f_l, h_m\mid j\geq 3, k\geq 1, l\geq 2, m\geq 1\}.$
Denote $N_{\underline{\mu}}:=U(\mathfrak{L}_{+})/J$. Then ${\rm Ind}(N_{\underline{\mu}}):=U(\mathfrak L)\otimes_{U(\mathfrak{L}_{+})} N_{\underline{\mu}}$ is a universal Whittaker module, and any Whittaker module is a quotient of some universal Whittaker module.

For any $r\in\Z_+, l, m\in\Z$, as in \cite{LLZ}, we denote
$$\omega_{l,m}^{(r)}=\sum_{i=0}^r\binom{r}{i}(-1)^{r-i}d_{l-m-i}d_{m+i}\in U(\mathfrak L).$$
\begin{lemm}\label{theo123456} Let $M(\lambda, \alpha, \beta, \gamma)$ and $V(\eta, \epsilon, \theta)$ be the $\mathfrak L$-modules defined as in \S \ref{pre}, and $\eta, \epsilon, \theta$ can not be identically zero. Then the following statements hold.
\begin{itemize}
\item[(1)] $d_i$ acts injectively on $M(\lambda, \alpha, \beta, \gamma)\otimes V(\eta, \epsilon, \theta)$ for any $i\in\mathbb{Z}$.
\item[(2)] For any $g(s,t)\in M(\lambda, \alpha, \beta, \gamma)$, we have
$$\omega_{l,m}^{(r)}(g(s,t))=0,\,\,\forall \,\,l, m, r\in\Z, \,\,r>2.$$
\item[(3)] For any $r>2$ and $0\neq g(s,t)\in M(\lambda, \alpha, \beta, \gamma)$, there exists $v\in V(\eta, \epsilon, \theta)$ and $l, m\in\Z$ such that
$$\omega_{l,m}^{(r)}(g(s,t)\otimes v)\neq0.$$
\end{itemize}
\end{lemm}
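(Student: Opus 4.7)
The plan is to treat the three parts in order, using the $d_0$-weight grading of $V = V(\eta,\epsilon,\theta)$ for (1), a direct finite-difference calculation on $M$ for (2), and a Leibniz expansion combined with (2) plus a careful choice of parameters for (3).

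For (1), I would decompose $V = \bigoplus_{k \geq 0} V_{\eta - k}$ into $d_0$-eigenspaces, so $M \otimes V = \bigoplus_k M \otimes V_{\eta - k}$. Since $d_i(g \otimes v) = (d_i g) \otimes v + g \otimes (d_i v)$ and $d_i$ shifts $d_0$-weight by $+i$, the operator $d_i$ sends $M \otimes V_{\eta-k}$ into the sum $M \otimes V_{\eta-k}$ (via $d_i \otimes 1$) and $M \otimes V_{\eta-k+i}$ (via $1 \otimes d_i$). For a nonzero $w = \sum_k w_k$, choosing the extremal index $k^*$ -- the largest $k$ with $w_{k^*} \neq 0$ when $i > 0$, the smallest when $i < 0$ -- makes the only piece of $d_i w$ landing in $M \otimes V_{\eta - k^*}$ equal to $(d_i \otimes 1) w_{k^*}$. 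This is nonzero because $d_i$ acts on $M = \C[s,t]$ as $g \mapsto \lambda^i (s + i\gamma) g(s-i, t)$, the composition of the automorphism $s \mapsto s - i$ with multiplication by the nonzero polynomial $s + i\gamma$. For $i = 0$, $d_0$ preserves each $M \otimes V_{\eta - k}$ and acts as multiplication by $s + \eta - k$, which has no zero divisors on $\C[s,t] \otimes V_{\eta-k}$.

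For (2), a direct computation gives
\[
d_{l-m-i}\, d_{m+i}\, g(s,t) = \lambda^l \bigl[s + (l-m-i)\gamma\bigr]\bigl[s - l + (m+i)(1+\gamma)\bigr]\, g(s-l, t);
\]
the bracketed product is a polynomial $P(i)$ in $i$ of degree at most $2$, so $\omega_{l,m}^{(r)} g = \lambda^l g(s-l, t) \cdot (\Delta^r P)(0) = 0$ whenever $r > 2$, since the $r$-th forward difference annihilates polynomials of degree less than $r$.

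For (3), I would expand $\omega_{l,m}^{(r)}(g \otimes v)$ by Leibniz; the contribution $(\omega_{l,m}^{(r)} g) \otimes v$ vanishes by (2), leaving three cross-sums. Taking $l = 0$, $m = -M$ with $M > r$, and $v = v_h$, the summand $(d_{m+i} g) \otimes (d_{l-m-i} v_h)$ vanishes because $d_{M-i} v_h = 0$, while $[d_{M-i}, d_{-M+i}] = -2(M-i) d_0 + \tfrac{(M-i)^3 - (M-i)}{12} C$ together with $d_{M-i} v_h = 0$ reduces $d_{M-i} d_{-M+i} v_h$ to $\bigl[-2(M-i)\eta + \tfrac{(M-i)^3 - (M-i)}{12}\theta\bigr] v_h$. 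What remains is
\[
\omega_{0,-M}^{(r)}(g \otimes v_h) = \sum_{i=0}^r \binom{r}{i}(-1)^{r-i} (d_{M-i} g) \otimes (d_{-M+i} v_h) \; + \; c_{r}\, g \otimes v_h,
\]
with $c_r = 0$ for $r > 3$ and $c_r = -\tfrac{\theta}{2}$ for $r = 3$ (the $\eta$-piece and the remaining $\theta$-pieces vanish by the finite-difference criterion). Since the $d_{-M+i} v_h$ lie in the pairwise distinct weight spaces $V_{\eta - (M-i)}$, all different from $V_\eta$, the components cannot cancel, and the whole expression is nonzero as soon as either $c_r \neq 0$ or some $d_{-M+i} v_h \neq 0$.

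The main obstacle is establishing the required non-vanishing across all admissible highest weights $(\eta, \epsilon, \theta) \neq (0, 0, 0)$. If $\theta \neq 0$, then $[d_M, d_{-M}] v_h = \tfrac{M^3 - M}{12}\theta\, v_h - 2M\eta\, v_h$ already forces $d_{-M} v_h \neq 0$ for $M$ large; if $\eta \neq 0$, the relation $[d_1, d_{-1}] v_h = -2\eta\, v_h$ gives $d_{-1} v_h \neq 0$, and choosing $M = r+1$ recovers the $i = r$ term. In the residual case $\eta = \theta = 0$, $\epsilon \neq 0$, I would instead take $v = h_{-N} v_h$ for suitable $N$; using $[d_j, h_{-N}] = -N h_{j-N}$, the same Leibniz analysis injects an $\epsilon$-proportional scalar into an appropriate weight-space component, again producing non-vanishing.
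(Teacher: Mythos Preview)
Your arguments for (1) and (2) are correct. For (1) you use the $d_0$-weight grading of $V$ and an extremal-weight argument, whereas the paper simply filters by the $s$-degree: writing $w=\sum_{j,k}s^jt^k\otimes v_{jk}$ with $v_{mn_m}\neq 0$, the term $\lambda^i s^{m+1}t^k\otimes v_{mk}$ survives in $d_i\cdot w$. Both arguments are short; yours is a genuine alternative. Part (2) is the same finite-difference computation as in the paper.

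For (3) the paper makes a cleaner parameter choice. It takes $v=v_h$, $l=r+1$, $m=-r-2$, so that $l-m-i>0$ and $m+i<0$ for all $0\le i\le r$. The crucial gain over your choice $l=0$ is that here $[d_{l-m-i},d_{m+i}]$ is a multiple of $d_l=d_{r+1}$ with no central term (since $l\neq 0$), hence $d_{l-m-i}d_{m+i}v_h=0$. Thus the Leibniz expansion collapses immediately to
\[
\omega_{l,m}^{(r)}(g\otimes v_h)=\sum_{i=0}^r\binom{r}{i}(-1)^{r-i}\,\lambda^{l-m-i}\bigl(s+(l-m-i)\gamma\bigr)g(s-l+m+i,t)\otimes d_{m+i}v_h,
\]
with no extra scalar piece $c_r\,g\otimes v_h$ to track. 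Since the $d_{m+i}v_h$ lie in pairwise distinct weight spaces, non-vanishing reduces to knowing that $d_{-k}v_h\neq 0$ for $2\le k\le r+2$, which the paper records as an observation.

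Your route ultimately needs the same non-vanishing, and your case analysis for $\theta\neq 0$ and $\eta\neq 0$ is fine; but the residual case $\eta=\theta=0$, $\epsilon\neq 0$ is only sketched, and switching to $v=h_{-N}v_h$ is unnecessary. The clean observation (which also justifies the paper's claim) is that $h_k\,d_{-k}v_h=[h_k,d_{-k}]v_h=-k\,h_0v_h=-k\epsilon\,v_h\neq 0$, so $d_{-k}v_h\neq 0$ for every $k\ge 1$ whenever $\epsilon\neq 0$. With this, you can keep $v=v_h$ throughout; and if you also move to $l=r+1$ as the paper does, the entire $c_r$ discussion disappears.
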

\begin{proof}

(1) For any $$0\neq v=\sum_{j=0}^m\sum_{k=0}^{n_j}s^jt^k\otimes v_{jk}\in M(\lambda, \alpha, \beta, \gamma)\otimes V(\eta, \epsilon, \theta)$$ with $v_{jk}\in V(\eta, \epsilon, \theta)$ for any $j,k$, and $v_{mn_m}\neq 0$, it follows from the $\mathfrak L$-module structure on $M(\lambda, \alpha, \beta, \gamma)$ in Definition \ref{defi2.2} that
$$d_i\cdot v=\sum_{k=0}^{n_m}\lambda^is^{m+1}t^k\otimes v_{mk}+\sum_{j=0}^{m}\sum_{k}s^jt^k\otimes w_{jk}\neq 0,$$
where $w_{jk}\in V(\eta, \epsilon, \theta)$ for any $j,k$. Hence, (1) follows.

(2) For any $h(t)\in\C[t]$ and $j\in\Z_+$, we have
\begin{eqnarray*}
\omega_{l,m}^{(r)}(s^jh(t))&=&\sum_{i=0}^r\binom{r}{i}(-1)^{r-i}d_{l-m-i}d_{m+i}(s^jh(t))\\
&=&\sum_{i=0}^r\binom{r}{i}(-1)^{r-i}d_{l-m-i}\big(\lambda^{m+i}(s+(m+i)\gamma)(s-m-i)^jh(t)\big)\\
&=&\sum_{i=0}^r\binom{r}{i}(-1)^{r-i}\lambda^{l}\big(s+(l-m-i)\gamma\big)\big(s+(m+i)\gamma-l+m+i\big)(s-l)^jh(t).
\end{eqnarray*}
This together with the following identity
$$\sum_{i=0}^r\binom{r}{i}(-1)^{r-i}i^j=0,\,\,\forall j, \,r\in\Z_+\,\,{\rm with}\,\,j<r$$
forces $\omega_{l,m}^{(r)}(g(s,t))=0$ provided $r>2$, proving (2).

(3) Fix any $r>2$. Take $v$ to be the highest weight vector of  $V(\eta, \epsilon, \theta)$. It is important to observe  that  the vectors  $d_{-2}v, d_{-3}v, \cdots, d_{-r-2}v$ are  linearly independent in $V(\eta, \epsilon, \theta)$. Take $l=r+1$ and $m=-r-2$. As $\omega_{l,m}^{(r)}(g(s,t))=0$ by (ii), we have
\begin{eqnarray*}
\omega_{l,m}^{(r)}(g(s,t)\otimes v)&=&\sum_{i=0}^r\binom{r}{i}(-1)^{r-i}d_{l-m-i}d_{m+i}(g(s,t)\otimes v)\\
&=&\sum_{i=0}^r\binom{r}{i}(-1)^{r-i}d_{l-m-i}(g(s,t))\otimes d_{m+i}(v)\\
&=&\sum_{i=0}^r\binom{r}{i}(-1)^{r-i}\lambda^{l-m-i}\big(s+(l-m-i)\gamma\big)g(s-l+m+i,t)\otimes d_{m+i}(v)\\
&\neq&0.
\end{eqnarray*}
Hence (3) follows.

\end{proof}

As a consequence of Lemma \ref{theo123456}, we have the following result which asserts that the tensor product modules constructed in the present paper are different from the other known non-weight modules.
\begin{prop}\label{theovvn}
Let  $\lambda,\a\in\C^*, \b,\r,\eta, \epsilon, \theta \in\C, $ and $\eta, \epsilon, \theta$ can not be identically zero. Then the tensor product module
$M(\lambda, \alpha, \beta, \gamma)\otimes V(\eta, \epsilon, \theta)$ is a new non-weight $\mathfrak L$-module.
\end{prop}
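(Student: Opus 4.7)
\emph{Proof plan.} The plan is to rule out both known classes of non-weight $\mathfrak{L}$-modules separately, matching each class against a different part of Lemma \ref{theo123456}. The proposition then follows by combining the two exclusions.

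First I would address the $U(\mathfrak{h})$-free rank-one modules. By the classification of \cite{CH1} recalled in Section \ref{pre}, any such module is of the form $\Omega(\lambda',\alpha',\beta',\gamma')$, $\Delta(\lambda',\alpha',\beta',\gamma')$, or $\Theta(\lambda',\alpha',\beta',\gamma')$ for some choice of parameters. On every module in this list, Lemma \ref{theo123456}(2) says that the quadratic operator $\omega_{l,m}^{(r)}$ annihilates every vector for all $l,m\in\Z$ and all $r>2$. Hence if $M(\lambda,\alpha,\beta,\gamma)\otimes V(\eta,\epsilon,\theta)$ were isomorphic (as $\mathfrak{L}$-module) to one of these modules, then $\omega_{l,m}^{(r)}$ would also act as zero on all of $M\otimes V$, in direct contradiction with Lemma \ref{theo123456}(3).

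Next I would rule out Whittaker modules. By the definition given just before Lemma \ref{theo123456}, every Whittaker module is a nonzero quotient of some universal Whittaker module $\mathrm{Ind}(N_{\underline{\mu}})$. The canonical generator $w$ of $N_{\underline{\mu}}$ satisfies $d_j\cdot w=0$ for all $j\geq 3$, directly from the defining ideal $J_{\underline{\mu}}$, and $w$ generates the whole induced module, so its image in any nonzero quotient is again a nonzero vector annihilated by $d_3$. Assuming an $\mathfrak{L}$-module isomorphism with $M\otimes V$ would therefore produce a nonzero element of $M\otimes V$ killed by $d_3$, contradicting the injectivity of $d_3$ supplied by Lemma \ref{theo123456}(1).

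The hypothesis that $(\eta,\epsilon,\theta)$ is not identically zero is genuinely required in the first step: if $V(\eta,\epsilon,\theta)$ were the trivial one-dimensional module, then $M\otimes V\cong M$ would itself belong to the first class. I do not foresee any serious technical obstacle here, since Lemma \ref{theo123456} already encodes all the substantive content; the main point to get right is the \emph{framing}, namely identifying $\omega_{l,m}^{(r)}$ as the invariant distinguishing tensor products from rank-one modules, and identifying $\ker d_3$ as the invariant distinguishing them from Whittaker modules. Once these two invariants are chosen, the proof reduces to citing the three parts of Lemma \ref{theo123456} in sequence, with no further computation.
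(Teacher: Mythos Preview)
Your proposal is correct and follows essentially the same strategy as the paper's proof: use Lemma \ref{theo123456}(2) and (3) to exclude the $U(\mathfrak{h})$-free rank-one modules via the $\omega_{l,m}^{(r)}$ invariant, and use Lemma \ref{theo123456}(1) to exclude Whittaker modules via the injectivity of $d_i$. The only cosmetic differences are the order in which you treat the two classes and that you pin down a single vector (the Whittaker generator) killed by a single $d_3$, whereas the paper phrases it as ``for any nonzero $v$ in a Whittaker module there exists $i$ with $d_i\cdot v=0$''; both versions suffice.
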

\begin{proof}
We need to show that the tensor product module $M(\lambda, \alpha, \beta, \gamma)\otimes V(\eta, \epsilon, \theta)$ is neither isomorphic to a Whittaker module nor isomorphic to a  $U(\mathfrak{h})$-free modules of rank one in \cite{CH1}. For that, let $W$ be a Whittaker module, then $W$ is isomorphic to a quotient of $N_{\underline{\mu}}$ for some $\underline{\mu}=(\mu_1,\cdots,\mu_5)\in\C^5$. It follows from Lemma \ref{theo123456} (i) that $M(\lambda, \alpha, \beta, \gamma)\otimes V(\eta, \epsilon, \theta)\ncong W$, since for any nonzero element $v\in{\rm Ind}(N_{\underline{\mu}})$ (resp. $w\in W$),  there exists a positive integer $i$ such that $d_i$ acts on $v$ (resp. $w$) trivially. Moreover, thanks to Lemma \ref{theo123456} (ii)  and (iii), $M(\lambda, \alpha, \beta, \gamma)\otimes V(\eta, \epsilon, \theta)\ncong M(\lambda', \alpha', \beta', \gamma')$ for any  $\lambda',\alpha'\in\C^*, \beta',\gamma'\in\C$. We complete the proof.
\end{proof}

\section{Realization of tensor product modules as induced modules}
In this section, we realize the tensor product modules $M(\lambda, \alpha, \beta, \gamma)\otimes \overline{V}(\eta, \epsilon, \theta)$ as certain induced
modules. For that, fix $\lambda\in\C^*$, let $$\mathfrak{b}_{\lambda}=\text{span}_{\C}\{d_m-\lambda^md_0,  f_m, h_n, e_n, C\mid m\in\N, n\in\Z_+\}$$ be the subalgebra of $\mathfrak L$.
\begin{defi}\label{defi2.4}\rm Let $\C[t]$ be the polynomial algebra in the variable $t$ with coefficients in $\C$. For $\lambda, \a\in\C^*, \b, \r, \eta,\epsilon,\theta\in\C$ and $g(t)\in\C[t]$, define the action of $\mathfrak{b}_{\lambda}$ on $\C[t]$ as follows:
\begin{align*}
\C[t]_{\lambda,\a, \b, \r, \eta,\epsilon,\theta}^{\Omega}:&\ \ \ \  (d_m-\lambda^md_0)\circ g(t)=\lambda^m (m\gamma-\eta) g(t),\\
& \ \ \ \ f_m\circ g(t)=-\frac{\lambda^m}\a(\frac{t}{2}-\b)(\frac{t}{2}+\b+1)g(t+2),\\
& \ \ \ \ h_n\circ g(t)=\lambda^n (t+\delta _{n,0}\epsilon) g(t),\ \ \ \ e_n\circ g(t)=\lambda^n \alpha g(t-2),\\
& \ \ \ \ C\circ g(t)=\theta g(t);\\
\C[t]_{\lambda,\a, \b, \r, \eta,\epsilon,\theta}^{\Delta}:&\ \ \ \ (d_m-\lambda^md_0)\circ g(t)=\lambda^m (m\gamma-\eta) g(t),\\
& \ \ \ \ f_m\circ g(t)=\lambda^m \alpha g(t+2),\\
& \ \ \ \ h_n\circ g(t)=\lambda^n (t+\delta _{n,0}\epsilon) g(t),\\
&\ \ \ \ e_n\circ g(t)=-\frac{\lambda^n}\a(\frac{t}{2}+\b)(\frac{t}{2}-\b-1)g(t-2),\\
& \ \ \ \ C\circ g(t)=\theta g(t);\\
\C[t]_{\lambda,\a, \b, \r, \eta,\epsilon,\theta}^{\Theta}:&\ \ \ \ (d_m-\lambda^md_0)\circ g(t)=\lambda^m (m\gamma-\eta) g(t),\\
& \ \ \ \ f_m\circ g(t)=-\frac{\lambda^m}\a(\frac{t}{2}-\b)g(t+2),\\
& \ \ \ \ h_n\circ g(t)=\lambda^n (t+\delta _{n,0}\epsilon) g(t),\ \ \ \ e_n\circ g(t)=\lambda^n \alpha(\frac{t}{2}+\b)\gamma g(t-2),\\
& \ \ \ \ C\circ g(t)=\theta g(t).
\end{align*}
where $ m\in\N, n\in\Z_+$.
\end{defi}

\begin{prop}
Keep notations as above, then $\C[t]_{\lambda,\a, \b, \r, \eta,\epsilon,\theta}^{\Omega}, \C[t]_{\lambda,\a, \b, \r, \eta, \epsilon,\theta}^{\Delta}$ and  $\C[t]_{\lambda,\a, \b, \r, \eta, \epsilon,\theta}^{\Theta}$ are $\mathfrak{b}_{\lambda}$-modules under the actions given in Definition \ref{defi2.4}.
\end{prop}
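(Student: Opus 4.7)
The most efficient strategy is to realize each of the three spaces $\C[t]^\bullet$ (for $\bullet \in \{\Omega,\Delta,\Theta\}$) as a $\mathfrak{b}_\lambda$-invariant subspace of the tensor product $M(\lambda,\alpha,\beta,\gamma) \otimes \overline{V}(\eta,\epsilon,\theta)$, which is already an $\mathfrak{L}$-module and hence a $\mathfrak{b}_\lambda$-module. If $v_h$ denotes the coset of $1$ in $\overline{V}(\eta,\epsilon,\theta)$, the proposed identification is $g(t) \longleftrightarrow g(t) \otimes v_h$, where one views $\C[t]$ as the subspace of $\C[s,t] = M(\lambda,\alpha,\beta,\gamma)$ consisting of polynomials with no $s$-dependence. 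Once stability of $\C[t]\otimes v_h$ under $\mathfrak{b}_\lambda$ and the matching of the restricted action with Definition \ref{defi2.4} are confirmed, the module axioms come for free.

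The first step is stability. For $X \in \mathfrak{b}_\lambda$ I would compute
\[
X \cdot (g(t)\otimes v_h) = (X\cdot g(t))\otimes v_h + g(t) \otimes (Xv_h)
\]
using the $M$-module structure of Definition \ref{defi2.2} together with the highest-weight relations $e_n v_h = 0$ for $n\geq 0$, $f_m v_h = h_m v_h = d_m v_h = 0$ for $m\geq 1$, $d_0 v_h = \eta v_h$, $h_0 v_h = \epsilon v_h$, and $Cv_h = \theta v_h$. For $h_n$, $e_n$, $f_m$, $C$ both summands either vanish or stay inside $\C[t]\otimes v_h$. The subtlety is with $d_m$ for $m\geq 1$: a bare $d_m$ injects an extra $s$-factor, so $\C[t]\otimes v_h$ is not $d_m$-stable on its own. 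This is precisely cancelled by the shifted combination $d_m - \lambda^m d_0$; indeed,
\[
(d_m - \lambda^m d_0) \cdot (g(t) \otimes v_h) = \lambda^m(s + m\gamma) g(t) \otimes v_h - \lambda^m s g(t) \otimes v_h - \lambda^m \eta g(t)\otimes v_h = \lambda^m (m\gamma - \eta) g(t) \otimes v_h,
\]
which explains the very choice of generators for $\mathfrak{b}_\lambda$ and matches the first line of Definition \ref{defi2.4}.

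The second step is to verify term-by-term that the action inherited on $\C[t]\otimes v_h$ coincides with the formulas of Definition \ref{defi2.4}. For example, in the $\Omega$ case, $e_n \cdot (g(t) \otimes v_h) = \lambda^n \alpha\, g(t-2) \otimes v_h$ and $f_m \cdot (g(t)\otimes v_h) = -\frac{\lambda^m}{\alpha}(\frac{t}{2}-\beta)(\frac{t}{2}+\beta+1)\, g(t+2)\otimes v_h$, while $h_0 \cdot (g(t)\otimes v_h) = (t+\epsilon)g(t)\otimes v_h$ thanks to the extra $\epsilon$ coming from $h_0 v_h = \epsilon v_h$. The matchings for the remaining generators, and for the $\Delta$ and $\Theta$ cases, follow the same pattern and are routine substitutions from Definition \ref{defi2.2}.

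Once stability and matching are in place, the linear bijection $g(t) \mapsto g(t)\otimes v_h$ intertwines the prescribed action of $\mathfrak{b}_\lambda$ on $\C[t]$ with the genuine action on the $\mathfrak{b}_\lambda$-invariant subspace $\C[t]\otimes v_h$ of $M \otimes \overline{V}$, so every Lie-bracket relation of $\mathfrak{b}_\lambda$ is automatically satisfied by the prescribed action. The only real obstacle is the case-by-case bookkeeping across the three modules; no new idea is required beyond the cancellation observation that $(d_m - \lambda^m d_0)\cdot (g(t)\otimes v_h)$ still lies in $\C[t]\otimes v_h$.
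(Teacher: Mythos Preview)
Your argument is correct, but it is genuinely different from what the paper does. The paper simply states that the assertion ``can be verified straightforwardly'' and omits all details, i.e.\ it implicitly relies on a direct check that each relation of $\mathfrak{b}_\lambda$ is preserved by the formulas in Definition~\ref{defi2.4}. Your approach instead \emph{embeds} $\C[t]$ into the already-existing $\mathfrak{L}$-module $M(\lambda,\alpha,\beta,\gamma)\otimes\overline{V}(\eta,\epsilon,\theta)$ via $g(t)\mapsto g(t)\otimes v_h$, checks that the image is $\mathfrak{b}_\lambda$-stable, and verifies that the inherited action agrees with Definition~\ref{defi2.4}. This buys you two things: first, you never have to verify a single Lie bracket relation by hand, since those are inherited from the ambient module; second, your computation of $(d_m-\lambda^m d_0)\cdot(g(t)\otimes v_h)$ explains \emph{why} the subalgebra $\mathfrak{b}_\lambda$ is built from the shifted generators $d_m-\lambda^m d_0$ rather than the $d_m$ themselves. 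Your argument also anticipates the content of Theorem~\ref{vec55}, where the paper shows that the induced module from $\C[t]^\bullet$ recovers the full tensor product --- you have essentially identified the generating $\mathfrak{b}_\lambda$-submodule in advance. The trade-off is that the paper's brute-force verification is logically self-contained at this point in the exposition, whereas your argument imports the tensor-product module structure from the earlier sections; since that structure is already available, this is not a defect.
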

\begin{proof}
The assertion can be verified straightforwardly, we omit the details.
\end{proof}

\begin{remm}
Let $\lambda,\a\in\C^*,\b,\r,\eta,\epsilon,\theta\in\C$. Then by a similar argument as that in the proof in \cite[Proposition 2.5 ]{CH1}, the $\mathfrak{b}_{\lambda}$-modules $\C[t]_{\lambda,\a, \b, \r, \eta, \epsilon,\theta}^{\Omega}$ and $\C[t]_{\lambda,\a, \b, \r, \eta, \epsilon,\theta}^{\Delta}$ are always irreducible, while the $\mathfrak{b}_{\lambda}$-module $\C[t]_{\lambda,\a, \b, \r, \eta, \epsilon,\theta}^{\Theta}$ is irreducible if and only if $2\b \notin \Z_+$.
\end{remm}

We always assume that $\lambda,\a\in\C^*,\b,\r,\eta,\epsilon,\theta\in\C$ in the following. Now one can form the induced
$\mathfrak L$-modules as follows:
\begin{eqnarray*} &\text{Ind}(\C[t]_{\lambda,\a, \b, \r, \eta, \epsilon,\theta}^{\Omega}):=U(\mathfrak L)\otimes_{ U(\mathfrak{b}_{\lambda})}\C[t]_{\lambda,\a, \b, \r, \eta, \epsilon,\theta}^{\Omega};\\
&\text{Ind}(\C[t]_{\lambda,\a, \b, \r, \eta, \epsilon,\theta}^{\Delta}):=U(\mathfrak L)\otimes_{ U(\mathfrak{b}_{\lambda})}\C[t]_{\lambda,\a, \b, \r, \eta, \epsilon,\theta}^{\Delta};\\
&\text{Ind}(\C[t]_{\lambda,\a, \b, \r, \eta, \epsilon,\theta}^{\Theta}):=U(\mathfrak L)\otimes_{ U(\mathfrak{b}_{\lambda})}\C[t]_{\lambda,\a, \b, \r, \eta, \epsilon,\theta}^{\Theta}.\end{eqnarray*}
\begin{theo}\label{vec55}
Keep notations as above. Then as $\mathfrak L$-modules
\begin{eqnarray*} &\Omega(\lambda,\a,\b,\r)\otimes \overline{V}(\eta, \epsilon, \theta)\cong \text{Ind}(\C[t]_{\lambda,\a, \b, \r, \eta, \epsilon,\theta}^{\Omega});\nonumber\\
&\Delta(\lambda,\a,\b,\r)\otimes \overline{V}(\eta, \epsilon, \theta)\cong \text{Ind}(\C[t]_{\lambda,\a, \b, \r, \eta, \epsilon,\theta}^{\Delta});\nonumber\\
&\Theta(\lambda,\a,\b,\r)\otimes \overline{V}(\eta, \epsilon, \theta)\cong\text{Ind}(\C[t]_{\lambda,\a, \b, \r, \eta, \epsilon,\theta}^{\Theta}).\end{eqnarray*}
\end{theo}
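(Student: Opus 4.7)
For each $M\in\{\Omega,\Delta,\Theta\}$, my plan is to construct the isomorphism via Frobenius reciprocity. Let $v_h$ denote the canonical generator of $\overline{V}(\eta,\epsilon,\theta)$; then $v_h$ is annihilated by $\{d_m,h_m,e_m,f_m\mid m\geq 1\}\cup\{e_0\}$ and is a simultaneous eigenvector of $d_0,h_0,C$ with respective eigenvalues $\eta,\epsilon,\theta$. I would first define a candidate map $\phi\colon\C[t]^{M}_{\lambda,\a,\b,\r,\eta,\epsilon,\theta}\to M(\lambda,\a,\b,\r)\otimes\overline{V}(\eta,\epsilon,\theta)$ by $g(t)\mapsto g(t)\otimes v_h$, and verify that $\phi$ is a $\mathfrak{b}_{\lambda}$-homomorphism by a direct generator-by-generator comparison of Definitions~\ref{defi2.2} and \ref{defi2.4}. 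The representative telescoping check is
$$(d_m-\lambda^md_0)(g(t)\otimes v_h)=\lambda^m(s+m\r)g(t)\otimes v_h-\lambda^m(s+\eta)g(t)\otimes v_h=\lambda^m(m\r-\eta)g(t)\otimes v_h,$$
while the remaining $\mathfrak b_\lambda$-relations follow at once from the $s$-independence of $g(t)$ and the annihilation/scalar actions of the other $\mathfrak b_\lambda$-generators on $v_h$. By the universal property of induced modules, $\phi$ extends uniquely to an $\mathfrak L$-module map $\Phi\colon\text{Ind}(\C[t]^{M}_{\lambda,\a,\b,\r,\eta,\epsilon,\theta})\to M(\lambda,\a,\b,\r)\otimes\overline{V}(\eta,\epsilon,\theta)$ via $u\otimes g(t)\mapsto u\cdot(g(t)\otimes v_h)$.

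\textbf{Surjectivity.} I would prove $\Phi$ is surjective by induction on the PBW degree in $\overline{V}(\eta,\epsilon,\theta)$. As a base case, every $s^at^b\otimes v_h$ lies in $\text{Im}\,\Phi$: starting from $1\otimes v_h=\Phi(1\otimes 1)$, iterated application of $h_0$ (which acts as multiplication by $t+\epsilon$ on such vectors) produces all $t^b\otimes v_h$, after which iterated application of $d_0$ (acting as $s+\eta$) produces all $s^at^b\otimes v_h$. For the inductive step, factor a PBW basis element as $w=X_{-i}w'$ with $w'$ of strictly smaller PBW degree and $X\in\{d,h,e,f\}$; the Leibniz identity $X_{-i}\cdot(s^at^b\otimes w')=(X_{-i}\cdot s^at^b)\otimes w'+s^at^b\otimes w$ places $s^at^b\otimes w$ in $\text{Im}\,\Phi$, since the other two summands lie in $\text{Im}\,\Phi$ by the inductive hypothesis (the left side is an $\mathfrak L$-action on an image element, while the first summand on the right is a $\C$-linear combination of $s^{a'}t^{b'}\otimes w'$ at the smaller PBW degree of $w'$).

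\textbf{Injectivity via a filtration argument.} For injectivity, write $\mathfrak n=\C d_0\oplus\mathfrak L_-$ as a PBW-complement of $\mathfrak b_{\lambda}$ in $\mathfrak L$, so the induced module has PBW basis $\{d_0^{a_0}v\otimes t^b\}$ with $v$ a PBW monomial in $\mathfrak L_-$ of total degree $|v|$. Filter $\overline{V}(\eta,\epsilon,\theta)$ by $\overline{V}_0\subseteq\overline{V}_1\subseteq\cdots$ by PBW degree. Since each $X_{-i}\in\mathfrak L_-$ satisfies $X_{-i}\cdot(f\otimes w)=X_{-i}f\otimes w+f\otimes X_{-i}w$, the first summand preserving $\overline V$-degree and the second strictly raising it, induction on $|v|$ gives $v\cdot(t^b\otimes v_h)\equiv t^b\otimes vv_h\pmod{M(\lambda,\a,\b,\r)\otimes\overline{V}_{|v|-1}}$. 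Since $vv_h$ is a $d_0$-weight vector with eigenvalue $\mu_v=\eta-\sum_k i_k$ (for $v=\prod_k X_{-i_k}$) and $d_0$ preserves the filtration, iterating $d_0$ yields $\Phi(d_0^{a_0}v\otimes t^b)\equiv(s+\mu_v)^{a_0}t^b\otimes vv_h\pmod{M(\lambda,\a,\b,\r)\otimes\overline{V}_{|v|-1}}$. Any alleged relation $\sum c_{a_0,v,b}\Phi(d_0^{a_0}v\otimes t^b)=0$, projected modulo the highest occurring $\overline V_{n-1}$ piece and decomposed along the basis $\{\overline{vv_h}\}_{|v|=n}$ of $\overline V_n/\overline V_{n-1}$, reduces to the linear independence of $\{(s+\mu_v)^{a_0}t^b\}_{a_0,b\geq 0}$ in $\C[s,t]$, forcing all $c_{a_0,v,b}$ with $|v|=n$ to vanish; induction on $n$ then gives injectivity.

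\textbf{Main obstacle.} The principal delicacy is the injectivity bookkeeping: each negative mode $X_{-i}$ acts nontrivially on both tensor factors, but the $\overline V$-filtration cleanly isolates the ``top'' $\overline V$-contribution from the ``lower-order'' $M$-contribution, and this separation is exactly what makes the leading-term identification go through. The three cases $M=\Omega,\Delta,\Theta$ are entirely parallel: the specific shift formulas for $e_m,f_m$ in Definitions~\ref{defi2.2} and \ref{defi2.4} differ, but they enter only in the initial verification that $\phi$ is a $\mathfrak b_\lambda$-homomorphism and do not affect the surjectivity or filtration arguments; in particular, no irreducibility hypothesis is needed, consistent with the theorem statement.
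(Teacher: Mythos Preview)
Your proof is correct and follows essentially the same strategy as the paper's: both construct the map $u\otimes g(t)\mapsto u\cdot(g(t)\otimes v_h)$, establish surjectivity by first producing all of $\C[s,t]\otimes v_h$ and then acting by $U(\mathfrak L_-)$, and prove injectivity via a leading-term argument. The only differences are packaging: you invoke Frobenius reciprocity where the paper defines $\phi$ directly on a PBW basis and checks the homomorphism property by hand, and for injectivity you use the PBW-degree filtration on $\overline V(\eta,\epsilon,\theta)$ whereas the paper introduces an explicit total order on the basis $\mathcal B_2$ and identifies the maximal term of each image---both arguments isolate the same ``top'' contribution $t^b\otimes vv_h$, so the content is the same.
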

\begin{proof}
In the following we only prove the first case, since a similar argument can be applied to the other two cases.

According to the $\mathrm{PBW}$ Theorem,  we see that
$\text{Ind}(\C[t]_{\lambda,\a, \b, \r, \eta, \epsilon,\theta}^{\Omega})$ has a basis
\begin{eqnarray*} &\mathcal{B}_1=\{f_{-q}^{F_{-q}}\cdots f_0^{F_0}e_{-p}^{E_{-p}}\cdots e_{-1}^{E_{-1}}h_{-m}^{H_{-m}}\cdots h_{-1}^{H_{-1}}d_{-n}^{D_{-n}}\cdots d_0^{D_0}\otimes t^i\mid D_0,\cdots,D_{-n},\\
& H_{-1}, \cdots, H_{-m}, E_{-1}, \cdots, E_{-p}, F_0,  \cdots, F_{-q}, i\in\Z_+\}
\end{eqnarray*}
and $\Omega(\lambda,\a,\b,\r)\otimes \overline{V}(\eta, \epsilon, \theta)$ has a basis
\begin{eqnarray*} &\mathcal{B}_2=\{t^is^{D_0}\otimes (f_{-q}^{F_{-q}}\cdots f_0^{F_0}e_{-p}^{E_{-p}}\cdots e_{-1}^{E_{-1}}h_{-m}^{H_{-m}}\cdots h_{-1}^{H_{-1}}d_{-n}^{D_{-n}}\cdots d_{-1}^{D_{-1}}\cdot v_h )\mid D_0,\cdots,D_{-n},\\
& H_{-1}, \cdots, H_{-m}, E_{-1}, \cdots, E_{-p}, F_0,  \cdots, F_{-q}, i\in\Z_+\}.
\end{eqnarray*}

Now we define the following linear map
\begin{equation*}
\phi: \text{Ind}(\C[t]_{\lambda,\a, \b, \r, \eta, \epsilon,\theta}^{\Omega}) \to\Omega(\lambda,\a,\b,\r)\otimes \overline{V}(\eta, \epsilon, \theta)\end{equation*}
by \begin{eqnarray*}
&&
\phi(f_{-q}^{F_{-q}}\cdots f_0^{F_0}e_{-p}^{E_{-p}}\cdots e_{-1}^{E_{-1}}h_{-m}^{H_{-m}}\cdots h_{-1}^{H_{-1}}d_{-n}^{D_{-n}}\cdots d_0^{D_0}\otimes t^i)\nonumber\\
&=&f_{-q}^{F_{-q}}\cdots f_0^{F_0}e_{-p}^{E_{-p}}\cdots e_{-1}^{E_{-1}}h_{-m}^{H_{-m}}\cdots h_{-1}^{H_{-1}}d_{-n}^{D_{-n}}\cdots d_0^{D_0}( t^i\otimes v_h )
\end{eqnarray*}
We claim that $\phi$ is an  $\mathfrak L$-module homomorphism. By Definition \ref{defi2.4}, one can observe that
\begin{equation*}(d_m-\lambda^md_0)(t^i\otimes v_h )=((d_m-\lambda^md_0)\circ t^i)\otimes v_h , \ \ \ \ \forall m\in\N.
\end{equation*}
Combining this with Definition \ref{defi2.4}, for any $i\in\Z_+$, $m\in\N, n\in\Z_+$ and $$x=f_{-q}^{F_{-q}}\cdots f_0^{F_0}e_{-p}^{E_{-p}}\cdots e_{-1}^{E_{-1}}h_{-m}^{H_{-m}}\cdots h_{-1}^{H_{-1}}d_{-n}^{D_{-n}}\cdots d_0^{D_0},$$  we have
\begin{eqnarray*}
\phi(xd_m\otimes t^i)&=&\phi(\lambda^mxd_0\otimes t^i+x(d_m-\lambda^md_0)\otimes t^i)\\
&=&\phi(\lambda^mxd_0\otimes t^i+x\otimes (d_m-\lambda^md_0)\circ t^i)\\
&=&\lambda^mxd_0(t^i \otimes v_h )+x((d_m-\lambda^md_0)\circ t^i\otimes v_h ),\\
&=&\lambda^mxd_0(t^i \otimes v_h )+x(d_m-\lambda^md_0)(t^i\otimes v_h ),\\
&=&xd_m(t^i\otimes v_h ),
\end{eqnarray*}
\begin{eqnarray*}
\phi(xf_m\otimes t^i)&=&\phi(x\otimes f_m\circ t^i)=x(f_m\circ t^i\otimes v_h)=xf_m(t^i\otimes v_h),
\end{eqnarray*}
\begin{eqnarray*}
\phi(xY_n\otimes t^i)&=&\phi(x\otimes Y_n\circ t^i)\\
&=&x(Y_n\circ t^i\otimes v_h)=xY_n(t^i\otimes v_h), \quad \text{where}\ Y_n=e_n \  \mathrm{or} \ h_n
\end{eqnarray*}
and
\begin{eqnarray*}
\phi(xC\otimes t^i)=\theta\phi(x\otimes t^i)=\theta x( t^i\otimes v_h)=xC(t^i\otimes v_h).
\end{eqnarray*}
Then for any $j\in\Z$, by the PBW Theorem, we can write
\begin{eqnarray*}
z_jx&=&\sum_{j_a}X_{j_a}x_{j_a}+\sum_{j_b}Y_{j_b}y_{j_b}d_{j_b}+\sum_{j_c}Z_{j_c} z_{j_c}h_{j_c}+\sum_{j_p}U_{j_p}u_{j_p}e_{j_p}\\
&&+\sum_{j_q}V_{j_q}v_{j_q}f_{j_q}+\sum_{j_l}W_{j_l}w_{j_l} C,
\end{eqnarray*}
where $z_j\in\{d_j, h_j, e_j, f_j\mid j\in\Z\}, X_{j_a}, Y_{j_b}, Z_{j_c}, U_{j_p}, V_{j_q}, W_{j_l}\in \C, x_{j_a}\otimes t^i, y_{j_b}\otimes t^i, z_{j_c}\otimes t^i, u_{j_p}\otimes t^i, v_{j_q}\otimes t^i, w_{j_l}\otimes t^i\in \mathcal{B}_1$ and $j_b, j_q\in\N, j_c, j_p\in\Z_+$. Now we deduce that for any $j\in\Z$,
\begin{eqnarray*}
\phi(z_jx\otimes t^i)&=&\phi\Big(\sum_{j_a}X_{j_a}x_{j_a}\otimes t^i+\sum_{j_b}Y_{j_b}y_{j_b}d_{j_b}\otimes t^i\\
&&+\sum_{j_c}Z_{j_c} z_{j_c}h_{j_c}\otimes t^i+\sum_{j_p}U_{j_p}u_{j_p}e_{j_p}\otimes t^i\\
&&+\sum_{j_q}V_{j_q}v_{j_q}f_{j_q}\otimes t^i+\sum_{j_l}W_{j_l}w_{j_l} C\otimes t^i\Big)\\
&=&\sum_{j_a}X_{j_a}x_{j_a}(t^i\otimes v_h)+\sum_{j_b}Y_{j_b}y_{j_b}d_{j_b}(t^i\otimes v_h)\\
&&+\sum_{j_c}Z_{j_c} z_{j_c}h_{j_c}(t^i\otimes v_h)+\sum_{j_p}U_{j_p}u_{j_p}e_{j_p}(t^i\otimes v_h)\\
&&+\sum_{j_q}V_{j_q}v_{j_q}f_{j_q}(t^i\otimes v_h)+\sum_{j_l}W_{j_l}w_{j_l}C(t^i\otimes v_h)\\
&=&z_jx(t^i\otimes v_h)=z_j\phi(x\otimes t^i)
\end{eqnarray*}
and
\begin{eqnarray*}
\phi(Cx\otimes t^i)=\theta\phi(x\otimes t^i)=\theta x(t^i\otimes v_h)=xC(t^i\otimes v_h)=Cx(t^i\otimes v_h)=C\phi(x\otimes t^i).
\end{eqnarray*}
This implies that $\phi$ is an  $\mathfrak L$-module homomorphism.

Next we shall show that $\phi$ is an  $\mathfrak L$-module isomorphism. Clearly, $t^i\otimes v_h\in \text{Im}(\phi)$ for all $i\in\Z_+$. Moreover, $d_0^j(t^i\otimes  v_h)\in \text{Im}(\phi)$, this implies that $t^is^j\otimes v_h\in \text{Im}(\phi)$ for all $j\in\Z_+$, i.e., $\Omega(\lambda,\a,\b,\r)\otimes v_h\subset \text{Im}(\phi)$. By applying the action of the elements of the form $f_{-q}^{F_{-q}}\cdots f_0^{F_0}e_{-p}^{E_{-p}}\cdots e_{-1}^{E_{-1}}h_{-m}^{H_{-m}}\cdots h_{-1}^{H_{-1}}d_{-n}^{D_{-n}}\cdots d_0^{D_0}$ on $\Omega(\lambda,\a,\b,\r)\otimes v_h$, we further deduce that
$\Omega(\lambda,\a,\b,\r)\otimes \overline{V}(\eta, \epsilon, \theta)\subset \text{Im}(\phi)$ and $\phi$ is surjective.
In order to obtain the injectivity of $\phi$, we first define a total order $``\prec"$ on $\mathcal{B}_2$
\begin{eqnarray*}
&&
t^is^{D_0}\otimes (f_{-q_{1}}^{F_{-q_{1}}}\cdots f_0^{F_0}e_{-p_{1}}^{E_{-p_{1}}}\cdots e_{-1}^{E_{-1}}h_{-m_{1}}^{H_{-m_{1}}}\cdots h_{-1}^{H_{-1}}d_{-n_{1}}^{D_{-n_{1}}}\cdots d_{-1}^{D_{-1}}\cdot v_h)\nonumber\\
&\prec&t^{i^{\prime}}s^{D_0^{\prime}}\otimes (f_{-q_{2}}^{F_{-q_{2}}^{\prime}}\cdots f_0^{F_0^{\prime}}e_{-p_{2}}^{E_{-p_{2}}^{\prime}}\cdots e_{-1}^{E_{-1}^{\prime}}h_{-m_{2}}^{H_{-m_{2}}^{\prime}}\cdots h_{-1}^{H_{-1}^{\prime}}d_{-n_{2}}^{D_{-n_{2}}^{\prime}}\cdots d_{-1}^{D_{-1}^{\prime}}\cdot v_h)
\end{eqnarray*}
if and only if
\begin{eqnarray*}
&
(D_{-1},\ldots ,D_{-n_{1}}, \overbrace{0, \ldots , 0}^{n_2}, H_{-1}, \ldots , H_{-m_{1}}, \overbrace{0, \ldots , 0}^{m_2},  E_{-1},  \ldots , E_{-p_{1}}, \overbrace{0, \ldots , 0}^{p_2},\nonumber\\
& F_0,  \ldots , F_{-q_{1}},\overbrace{0, \ldots , 0}^{q_2}, D_0, i)\nonumber\\
& <(D_{-1}^{\prime}, \ldots , D_{-n_{2}}^{\prime}, \overbrace{0, \ldots , 0}^{n_1}, H_{-1}^{\prime}, \ldots , H_{-m_{2}}^{\prime}, \overbrace{0, \ldots , 0}^{m_1}, E_{-1}^{\prime}, \ldots , E_{-p_{2}}^{\prime}, \overbrace{0, \ldots , 0}^{p_1},\nonumber\\
& F_0^{\prime}, \ldots , F_{-q_{2}}^{\prime}, \overbrace{0, \ldots , 0}^{q_1}, D_0^{\prime}, i^{\prime}),
\end{eqnarray*}
where
$$(a_1, \ldots, a_l)<(b_1, \ldots, b_l)\Longleftrightarrow \exists\, k>0  \mbox { \ such that \ } a_i=b_i \mbox { \ for all\ } i<k \mbox { \ and \ } a_k<b_k.$$
An explicit calculation gives
\begin{eqnarray*}
&&
f_{-q}^{F_{-q}}\cdots f_0^{F_0}e_{-p}^{E_{-p}}\cdots e_{-1}^{E_{-1}}h_{-m}^{H_{-m}}\cdots h_{-1}^{H_{-1}}d_{-n}^{D_{-n}}\cdots d_0^{D_0}( t^i\otimes v_h)\nonumber\\
&=&t^is^{D_0}\otimes (f_{-q}^{F_{-q}}\cdots f_0^{F_0}e_{-p}^{E_{-p}}\cdots e_{-1}^{E_{-1}}h_{-m}^{H_{-m}}\cdots h_{-1}^{H_{-1}}d_{-n}^{D_{-n}}\cdots d_{-1}^{D_{-1}}\cdot v_h)+\mbox{ \  lower terms. \ }
%&&+\mbox{ \  lower terms. \ }
\end{eqnarray*}
Since $\mathcal{B}_2$ is a basis of $\Omega(\lambda,\a,\b,\r)\otimes \overline{V}(\eta, \epsilon, \theta)$, so is the set
\begin{eqnarray*} &\{f_{-q}^{F_{-q}}\cdots f_0^{F_0}e_{-p}^{E_{-p}}\cdots e_{-1}^{E_{-1}}h_{-m}^{H_{-m}}\cdots h_{-1}^{H_{-1}}d_{-n}^{D_{-n}}\cdots d_0^{D_0}( t^i\otimes v_h)\mid D_0,\cdots,D_{-n},\\
& H_{-1}, \cdots, H_{-m}, E_{-1}, \cdots, E_{-p}, F_0,  \cdots, F_{-q}, i\in\Z_+\}.
\end{eqnarray*}
Thus, $\phi$ is injective, completing the proof.
\end{proof}

As a consequence of Theorem \ref{theoo1}, Theorem \ref{vec55}, Proposition \ref{pop1} and \cite[Corollary 1]{K}, we have the following sufficient and necessary conditions for reducibility of the induced modules

\begin{coro}\label{last cor}
Let $\lambda,\a\in\C^*,\b,\r,\eta,\epsilon,\theta\in\C$. Then the following statements hold.
\begin{itemize}
\item[(1)] The induced module $\text{\rm Ind}(\C[t]_{\lambda,\a, \b, \r, \eta, \epsilon,\theta}^{M})$ for $M\in\{\Omega,\Delta\}$ is reducible if and only if one of the following conditions holds.
\begin{itemize}
\item[(i)] $\epsilon+m(\theta+2)-n=0$ for some $m, n\in\Z_+$.
\item[(ii)] $-\epsilon+m(\theta+2)-n-1=0$ for some $m, n\in\N$.
\item[(iii)] $\theta\neq-2$ and there exists some $m, n\in\N$ such that
\begin{eqnarray*}
&&\eta+\frac{\epsilon^2+2\epsilon}{4(\theta+2)}+\frac{1}{48}\Big(\frac{-\theta^2+14\theta+26}{\theta+2}(m^2+n^2)\\
&&\,\,+\sqrt{\frac{(\theta^2-2\theta-2)(\theta^2-26\theta-50)}{(\theta+2)^2}}(m^2-n^2)
-24mn+\frac{2\theta^2-4\theta-4}{\theta+2}\Big)=0.
\end{eqnarray*}
\end{itemize}

\item[(2)]  The induced module $\text{\rm Ind}(\C[t]_{\lambda,\a, \b, \r, \eta, \epsilon,\theta}^{\Theta})$ is reducible if and only if $2\b\in\Z_+$ or one of the conditions {\rm (i), (ii), (iii)}  in {\rm (1)} holds.
\end{itemize}
\end{coro}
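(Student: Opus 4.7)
The plan is to reduce the whole statement to two inputs that are already available: the identification \(\text{\rm Ind}(\C[t]^{M}_{\lambda,\a,\b,\r,\eta,\epsilon,\theta})\cong M(\lambda,\a,\b,\r)\otimes\overline{V}(\eta,\epsilon,\theta)\) supplied by Theorem \ref{vec55}, and the reducibility criteria of the two tensor factors supplied by Proposition \ref{pop1} for the $M$-factor and by \cite[Corollary 1]{K} for the Verma factor. The content of \cite[Corollary 1]{K}, which describes the reducibility locus of $\overline{V}(\eta,\epsilon,\theta)$ for the affine-Virasoro algebra of type $A_1$, is exactly the union of conditions (i), (ii), (iii) appearing in the statement; I would simply invoke this verbatim, so that the job left is to move reducibility/irreducibility across the tensor product $M\otimes\overline{V}$.

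For the ``if'' direction, I would argue as follows. Suppose one of (i), (ii), (iii) holds. Then by \cite[Corollary 1]{K} the Verma module $\overline{V}(\eta,\epsilon,\theta)$ carries a proper nonzero $\mathfrak{L}$-submodule $J$. Since $\otimes_{\C}$ is exact, $M(\lambda,\a,\b,\r)\otimes J$ embeds as a proper nonzero submodule of $M(\lambda,\a,\b,\r)\otimes\overline{V}(\eta,\epsilon,\theta)$, and via Theorem \ref{vec55} the induced module is reducible. For part (2), if instead $2\b\in\Z_+$, then by Proposition \ref{pop1} the module $\Theta(\lambda,\a,\b,\r)$ has a proper nonzero submodule $N$, and $N\otimes\overline{V}(\eta,\epsilon,\theta)$ is again a proper nonzero submodule, giving reducibility.

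Conversely, assume none of the listed conditions is satisfied (and, in case (2), that $2\b\notin\Z_+$). Then by \cite[Corollary 1]{K} the Verma module $\overline{V}(\eta,\epsilon,\theta)$ is already irreducible, so $\overline{V}(\eta,\epsilon,\theta)=V(\eta,\epsilon,\theta)$; moreover Proposition \ref{pop1} tells us that $M(\lambda,\a,\b,\r)$ is irreducible (automatic for $M\in\{\Omega,\Delta\}$, and using $2\b\notin\Z_+$ for $M=\Theta$). Theorem \ref{theoo1} then applies and yields the irreducibility of $M(\lambda,\a,\b,\r)\otimes V(\eta,\epsilon,\theta)$, which is the induced module by Theorem \ref{vec55}.

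I do not expect a serious obstacle; everything is really a repackaging of results already proved. The only point that requires genuine care is the bookkeeping match between formula (iii) as displayed and the shape of the Kac-determinant formula recorded in \cite[Corollary 1]{K}: one must check that the radical $\sqrt{(\theta^{2}-2\theta-2)(\theta^{2}-26\theta-50)}/(\theta+2)$ and the coefficients $\tfrac{-\theta^{2}+14\theta+26}{\theta+2}$ exactly reproduce the Virasoro singular-vector curves $\phi_{m,n}=0$ evaluated at the central charge dictated by the affine level $\theta$, with (i) and (ii) encoding the two families of $\widehat{\mathfrak{sl}_{2}}$ singular vectors. Once this identification is recorded, the corollary follows directly.
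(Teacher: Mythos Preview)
Your proposal is correct and follows essentially the same approach as the paper: both arguments combine Theorem~\ref{vec55} (to identify the induced module with $M(\lambda,\a,\b,\r)\otimes\overline{V}(\eta,\epsilon,\theta)$), Theorem~\ref{theoo1} (for irreducibility when both factors are irreducible), Proposition~\ref{pop1} (for the $M$-factor), and \cite[Corollary~1]{K} (for the Verma factor). The paper compresses this into the single observation that the induced module is irreducible if and only if both $M(\lambda,\a,\b,\r)$ and $\overline{V}(\eta,\epsilon,\theta)$ are, while you unpack the two directions more explicitly; the content is the same.
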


\begin{proof}
It follows from  Theorem \ref{theoo1} and  Theorem \ref{vec55} that $\text{Ind}(\C[t]_{\lambda,\a, \b, \r, \eta, \epsilon,\theta}^{M})$ is irreducible for $M\in\{\Omega,\Delta, \Theta\}$ if and only if both $M(\lambda,\a,\b,\r)$ and $\overline{V}(\eta, \epsilon, \theta)$ are irreducible. Consequently, the desired assertion follows directly from Proposition \ref{pop1} and \cite[Corollary 1]{K}.
\end{proof}

\subsection*{Acknowledgements}
Yu-Feng Yao is grateful to professor Yufeng Pei for providing the references \cite{K} and \cite{LPX}.

\end{document}